\DeclareMathOperator\Spec{\displaystyle{Spec_{r}}}
\DeclareMathOperator\supp{supp}
\theoremstyle{plain}
\newtheorem{theorem}{Theorem}[section]
\newtheorem{lemma}[theorem]{Lemma}
\newtheorem{definition}[theorem]{Definition}
\newtheorem{corollary}[theorem]{Corollary}
\newtheorem{proposition}[theorem]{Proposition}
\newtheorem{remark}[theorem]{Remark}
\begin{document}

\title[Discrete orderings in the real spectrum]{Discrete orderings in the real spectrum}
\author{Shahram Mohsenipour}

\address{School of Mathematics, Institute for Research in Fundamental Sciences (IPM)
        P. O. Box 19395-5746, Tehran, Iran}\email{sh.mohsenipour@gmail.com}

%\urladdr{http://www.ipm.ac.ir/IPM/people}
\thanks{The research of the first author was in part supported by a grant from IPM (No. 97030403)}

\subjclass[2010]{14G30, 06A70}
\keywords{Real spectrum, Discrete orderings}
%\date{\today}
\begin{abstract} We study discrete orderings in the real spectrum of a commutative ring by defining discrete prime cones and give an algebro-geometric meaning to some kind of diophantine problems over discretely ordered rings. Also for a discretely ordered ring $M$ and a real closed field $R$ containing $M$ we prove a theorem on the distribution of the discrete orderings of $M[X_1,\dots,X_n]$ in $\Spec(R[X_1,\dots,X_n])$ in geometric terms. To be more precise, we prove that any ball $\mathbb{B}(\alpha,r)$ in $\Spec(R[X_1,\dots,X_n]$) with center $\alpha$ and radius $r$ (defined via Robson's metric) contains a discrete ordering of $M[X_1,\dots,X_n]$ whenever $r$ is non-infinitesimal and $\alpha$ is away from all hyperplanes over $M$ passing through the origin.
\end{abstract}
\maketitle
\bibliographystyle{amsplain}
\section{introduction}
Let $(M,<)$ be an ordered commutative ring with identity. We say that $(M,<)$ is a {\em discretely ordered ring}, or equivalently, the order $<$ is {\em discrete}, if there is no $a\in M$ with $0<a<1$. Two basic examples are the ring of integers $\mathbb{Z}$ with its usual ordering and the polynomial ring $\mathbb{Z}[X_1,\dots,X_n]$ ordered in such a way that $X_1$ is infinitely large and $X_{i+1}$ is greater than every element of $\mathbb{Z}[X_1,\dots,X_i]$, $i=1,\dots,n-1$. It is easily seen that the ring of integers is a convex subring of any discretely ordered ring. We are interested in some kind of diophantine problems which can be formulated in general terms as follows. Let $M$ be a discretely ordered ring and $f(X_1,\dots,X_n)\in M[X_1,\dots,X_n]$, we want to know whether the equation $f(X_1,\dots,X_n)=0$ has a solution in a discretely ordered ring $N$ extending $M$ which means that $M$ is a subring of $N$ and the restriction of the order of $N$ to $M$ is the same as the order of $M$. We also say that $N$ is a discrete extension of $M$.

This type of problems have already been considered in mathematical logic in the model theoretic study of weak systems of arithmetic. For instance it is proved in \cite{otero} that if $M$ is a discretely ordered ring and $a\in M$ such that $a$ is infinite, i.e., $a>n$ for all $n\in\mathbb{Z}$, then the equation $x^2+y^2=a$ has a solution in a discrete extension of $M$. Also in an important result and by using the arithmetic theory of curves, van den Dries \cite{vandendries} has shown that there is a decision procedure which for a given $f(X,Y)\in\mathbb{Z}[X,Y]$ decides whether $f(X,Y)=0$ has a solution in a discretely ordered ring. The existence of a general decision procedure for $f(X_1,\dots,X_n)=0$ with $f(X_1,\dots,X_n)\in \mathbb{Z}[X_1,\dots,X_n]$ is major open problem in this field posed by Wilkie \cite{wilkie}.

Having regarded discrete orderings as arithmetical objects and also motivated by Arithmetic Geometry, we give a geometric meaning to the above diophantine problems in the first part of the paper. Our main tool for doing this is the real spectrum, coming from the real algebraic geometry. Let $A$ be a commutative ring with the identity, the real spectrum of $A$, denoted by $\Spec(A)$, is a topological space consisting of all pairs $(\mathfrak{p},<)$ such that $\mathfrak{p}$ is a real prime ideal of $A$ and $<$ is an ordering of the quotient field of $A/\mathfrak{p}$. Any such pair $(\mathfrak{p},<)$ corresponds to a prime cone $\alpha$ of $A$ with $\supp(\alpha)=\mathfrak{p}$. We shall define a {\em discrete prime cone} of $A$ and show that the discrete prime cones of $A$ are in bijective correspondence with those pairs $(\mathfrak{p},<)$ such that $<$ is a discrete ordering of the ring $A/\mathfrak{p}$. Now let $M$ be discretely ordered subring of $A$, we define an {\em M-discrete prime cone} $\alpha$ of $A$ as a discrete prime cone of $A$ such that $\alpha\cap M$ becomes the positive cone of $M$. We also show that $M$-discrete prime cones of $A$ are in bijective correspondence with those elements $\alpha$ of $\Spec(A)$ such that there is an injective order preserving map from $M$ into $k(\supp(\alpha))$, where $k(\supp(\alpha))$ denotes the quotient field of $A/\supp(\alpha)$. Coming back to our diophantine problems, let $R$ be a real closed field and $M$ be discrete subring of $R$. We say that $\alpha\in\Spec(R[X_1,\dots,X_n])$ is an $M$-{\em arithmetical point} of $\Spec(R[X_1,\dots,X_n])$ if $\alpha\cap M[X_1,\dots,X_n]$ is an $M$-discrete prime cone of $M[X_1,\dots,X_n]$. Now we can present our geometric counterpart of the diophantine problem stated above as follows. The equation $f(X_1,\dots,X_n)=0$ has a solution in a discretely ordered ring $N$ extending $M$ iff there is an $M$-arithmetical point of $\widetilde{V}\subseteq\Spec(R[X_1,\dots,X_n])$, where $V$ is the algebraic subset of $R^n$ defined by $f(X_1,\dots,X_n)=0$ and $\widetilde{V}$ is obtained by Coste-Roy tilde operation \cite{BCR}. This also will enable us to discuss the discrete orderings of the polynomial ring $M[X_1,\dots,X_n]$ in geometric terms. In fact the discrete orderings of $M[X_1,\dots,X_n]$ are in bijective correspondence with those  $M$-arithmetical points of $\Spec(R[X_1,\dots,X_n])$  which are transcendental, namely, they do not lie on any $\widetilde{V}\subseteq\Spec(R[X_1,\dots,X_n])$ where $V$ is a proper real algebraic subset of $R^{n}$ defined over $M$.

In the second part of the paper, which is the main part, we prove a theorem on the distribution of transcendental $M$-arithmetical points in $\Spec(R[X_1,\dots,X_n])$ in geometric terms. In the sequel, we assume familiarity with the basic facts about the real spectrum (See Chapter 7 of \cite{BCR}). Let $\bar{a}=(a_1,\dots,a_n)\in M^{n}$ and $r\in R^{>0}$. Let $H_{\bar{a}}$ be the hyperplane in $R^n$ defined by
\[
H_{\bar{a}}=:\{x\in R^n\,|\,\sum_{i=1}^{n} a_i x_i=0\}.
\]
Let $\alpha$ be a transcendental $M$-arithmetical point of $\Spec(R[\bar{X}])$. This implies that $M[X_1(\alpha),\dots,$ $X_n(\alpha)]$ is a discretely ordered subring of $k(\alpha)$ (the real closure of the residue field $k(\supp\alpha)$) and for $a_1,\dots,a_n$ in $M$ we have that $a_1 X_1(\alpha)+\dots+a_n X_n(\alpha)\neq0$ holds true in $k(\alpha)$. Working in $ \Spec(R[\bar{X}])$, it is equivalent to saying that $\alpha$ doesn't lie on the hyperplane $\widetilde{H}_a$. Let $\mathbb{B}(\alpha,r)$ be the ball with center $\alpha$ and radius $t$ defined by the metric $\mu$ introduced by Robson \cite{robson}. In the main theorem of this paper, Theorem \ref{main}, we prove that {\em if $\mathbb{B}(\alpha,t)\cap\widetilde{H}_{\bar{a}}=\emptyset$ for every $\bar{a}\in M^{n}$ and every finite $t\in R$, then every ball $\mathbb{B}(\alpha,r)$ with a non-infinitesimal radius $r$ contains a transcendental $M$-arithmetical point.}

We now give a brief sketch of how the proof proceeds. We work in $k(\alpha)^n$. Let $S_{\alpha,r}\subseteq k(\alpha)^n$ be the Euclidean ball with center $(X_1(\alpha),\dots,X_n(\alpha))$ and radius $r$. We first show that for any finite set of non-constant polynomials $F_1(\bar{X}),\dots, F_l(\bar{X})$ in $M[\bar{X}]$, there is a positive integer $N$ and a suitable vector $\overrightarrow{q}\in\mathbb{Q}^n$ such that for any $N$ successive points $P_1,\dots,P_N$ in $S_{\alpha,r}$, lying on the line $L$ passing through $P_1$ and parallel to $\overrightarrow{q}$ with the following properties: (1) $P_1=(X_1(\alpha),\dots,X_n(\alpha))$, (2) each $||\overline{P_iP_{i+1}}||$ is finite and non-infinitesimal, we have that there is $P\in\{P_1,\dots,P_N\}$ such that all $F_1(P),\dots,F_n(P)$ are infinite. This will be done through a reduction process  by successive using of the higher-dimensional mean value theorem for real closed fields and also choosing the $\mathbb{Q}$-rational point $\bar{q}$ from a suitable Zariski open subset of the affine space $\mathbb{A}^n_{k(\alpha)}$. Having shown $\pi(\widetilde{S_{\alpha,r}})\subseteq\mathbb{B}(\alpha,r)$, we can come back into $\mathbb{B}(\alpha,r)\subseteq\Spec(R[\bar{X}])$, where $\pi$ is the canonical projection
\[
\pi\colon\Spec(k(\alpha)[X_1,\dots,X_n])\longrightarrow\Spec(R[X_1,\dots,X_n])
\]
and then by using a compactness argument, we can find $\gamma$ in $\mathbb{B}(\alpha,r)$ such that for every non-constant polynomial $F(\bar{X})$ in $M[\bar{X}]$, $F(X_1(\gamma),\dots,X_n(\gamma))$ is infinite in $k(\alpha)$. It will follow that $M[X_1(\gamma),\dots,X_n(\gamma)]$ is a discretely ordered ring, so $\gamma$ is a transcendental $M$-arithmetical point.

It is worth mentioning that the starting point of this work was Boughattas' ``real" proof \cite{boughattas} of Wilkie's extension theorem \cite{wilkie} in model theory of arithmetic which for some time we have been thinking it was potentially capable of saying something about the distribution of discrete orderings in some suitable space of orderings. Wilkie's theorem asserts that if $K$ is an $|R|^{+}$-saturated extension of a real closed field $R$ and $\alpha\in K$ has infinite distance from every $\frac{a}{n}$ where $a\in M$, $n\in\mathbb{Z}$, then there is $\beta\in K$ such that $\beta$ has finite distance from $\alpha$ and $M[\beta]$ is a discretely ordered ring. Wilkie's original proof uses algebraically closed fields. In fact Theorem \ref{main} and its proof can be seen as a higher dimensional generalization of Wilkie's theorem and Boughattas' proof in terms of the real algebraic geometry.

\section{Preliminaries}\label{prelim}
In this paper we assume familiarity with basic notions of the real spectrum (Chapter 7, \cite{BCR}) and also basic model theory of real closed fields at the level of Tarski's transfer theorem. However we review some basic definitions and make some conventions. By ring we always mean a commutative ring with identity. Let $A$ be a domain, $F(A)$ will denote the fraction field of $A$. For a ring $A$ we denote the real spectrum of $A$ by $\Spec(A)$ and for $\alpha\in\Spec(A)$ we denote the residue field of $\alpha$ by $F(A/\supp(\alpha))=k(\supp(\alpha))$. The prime cone $\alpha$ induces an ordering $<_{\alpha}$ on $k(\supp(\alpha))$. We denote its real closure by $k(\alpha)$. We sometimes denote $k(\alpha)$ by $k_R(\alpha)$ when we want to emphasis on the base field $R$. Let $f\in A$ and $\alpha\in\Spec(A)$, then $f(\alpha)$ denotes the image of $f$ under the canonical map $A\longrightarrow A/\supp(\alpha)$. Let $R$ be a real closed field and $\mathcal{S}\subseteq R^n$ be a semialgebraic set. We denote the formula defining $\mathcal{S}$ again by $\mathcal{S}$ and define
\[
\widetilde{S}:=\big{\{}\alpha\in\Spec(R[X_1,\dots,X_n])|\,k(\supp(\alpha))\models\mathcal{S}(X_1(\alpha),\dots,X_n(\alpha)) \big{\}}.
\]
For a real closed field $K$ containing $R$, we denote the $K$-rational points of $\mathcal{S}$ by $\mathcal{S}(K)$, which is
\[
\mathcal{S}(K):=\{\bar{x}\in K^n|\,K\models\mathcal{S}(x_1,\dots,x_n)\}.
\]

We say that an element $a$ of a (non-archimedean) real closed field $R$ is {\em finite} if there is a positive integer $n$ such that $|a|<n$, otherwise it is called {\em infinite}. A finite element $a\in R$ is called {\em infinitesimal}, if for every positive integer $n$, we have $|a|<\frac{1}{n}$. We denote the subset of all finite elements of $R$ by $R_{fin}$.

We denote the set of non-negative integers by $\omega$, also $A^n$ denotes the set of n-tuples with entries from $A$ and $\omega^{n}_{*}=\omega^{n}-\{\bar{0}\}$. Let $\bar{a},\bar{b}\in\omega^{n}$, then $\bar{a}+\bar{b}\in\omega^{n}$ is the pointwise sum of $\bar{a}$ and $\bar{b}$.

For any two points $P$ and $Q$ in a Euclidean space, we denote the line passing through $P$ and $Q$ by $PQ$. Also $\overline{PQ}$, $||\overline{PQ}||$ and $\overrightarrow{PQ}$ will denote, respectively, the segment with the endpoints $P$ and $Q$, its length and the vector with the tail $P$ and the head $Q$.

We now review what we need from Robson's metric \cite{robson}. Let $R$ be a real closed field and $\alpha,\beta\in\Spec(R[X_1,\dots,X_n])$. Let $A,B$ be two semialgebraic subsets of $R^n$ with $\alpha\in\widetilde{A}$, $\beta\in\widetilde{B}$ we define
\[
d(A,B):= \inf_{\substack{\bar{a}\in A\\ \bar{b}\in B}} ||\bar{a}-\bar{b}||,
\]
in which the infimum is to be taken in $\Spec(R[X])$ as explained in \cite{robson}. Now we define
\[
\mu(\alpha,\beta):= \sup_{\substack{\alpha\in\widetilde{A}\\ \beta\in\widetilde{B}}} d(A,B),
\]
again the supremum is taken in $\Spec(R[X])$ in a similar fashion. So we have the following function
\[
\mu\colon\Spec(R[\bar{X}])\times\Spec(R[\bar{X}])\rightarrow\Spec(R[X]).
\]
\begin{theorem}[Robson \cite{robson}, Theorem II]
The function $\mu$ is a positive definite symmetric function satisfying the triangle inequality. Moreover, if $\bar{a},\bar{b}\in R^n\subseteq\Spec(R[\bar{X}])$, then $\mu(\bar{a},\bar{b})=||\bar{a}-\bar{b}||$.
\end{theorem}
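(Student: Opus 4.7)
The plan is to verify symmetry, the formula on rational points, positive definiteness, and the triangle inequality separately, exploiting the semialgebraic structure of the defining expressions and the fact that, by Tarski--Seidenberg, the infimum in $d$ and the supremum in $\mu$ can be interpreted as elements of the totally ordered set $\Spec(R[X])$.

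Symmetry is immediate, since $||\bar a-\bar b||=||\bar b-\bar a||$ gives $d(A,B)=d(B,A)$ for all semialgebraic $A,B$ and this relation passes to the supremum. For the formula on rational points $\bar a,\bar b\in R^n$, the singletons $A=\{\bar a\}$, $B=\{\bar b\}$ realize $d(A,B)=||\bar a-\bar b||$, while for any larger $A\ni\bar a$, $B\ni\bar b$ the pair $(\bar a,\bar b)$ itself witnesses $d(A,B)\le||\bar a-\bar b||$; hence the supremum is attained by singletons and equals $||\bar a-\bar b||$.

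For positive definiteness: if $\alpha\in\widetilde A\cap\widetilde B$ then $\widetilde A\cap\widetilde B=\widetilde{A\cap B}\ne\emptyset$, so by model completeness of RCF the set $A\cap B$ is already non-empty in $R^n$, forcing $d(A,B)=0$ and hence $\mu(\alpha,\alpha)=0$. Conversely, when $\alpha\ne\beta$ one selects a polynomial $f\in R[\bar X]$ with $f>_\alpha 0$ and $f\le_\beta 0$; intersecting $\{f\ge\varepsilon\}$ and $\{f\le-\varepsilon\}$ with a sufficiently large ball---where $\varepsilon$ and the radius may need to be spectral quantities adapted to the possibly non-standard coordinates of $\alpha,\beta$---produces disjoint closed bounded semialgebraic $A\ni\alpha$, $B\ni\beta$ with $d(A,B)>0$ in $\Spec(R[X])$, so $\mu(\alpha,\beta)>0$.

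The triangle inequality $\mu(\alpha,\gamma)\le\mu(\alpha,\beta)+\mu(\beta,\gamma)$ is the main obstacle, because set-distance $d(\cdot,\cdot)$ does \emph{not} satisfy a triangle inequality for arbitrary intermediate sets. The cleanest route is to reinterpret $\mu$ via realizations: in a sufficiently saturated real closed extension $K$ of $R$ containing realizations $\bar a^*,\bar b^*,\bar c^*\in K^n$ of $\alpha,\beta,\gamma$, one shows $\mu(\alpha,\beta)$ intrinsically encodes $||\bar a^*-\bar b^*||$, by taking $A,B$ in the supremum to be semialgebraic neighbourhoods of $\bar a^*,\bar b^*$ of arbitrarily small spectral diameter and verifying that $d(A,B)\to||\bar a^*-\bar b^*||$ from below. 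With this identification the triangle inequality reduces to the pointwise estimate $||\bar a^*-\bar c^*||\le||\bar a^*-\bar b^*||+||\bar b^*-\bar c^*||$ in $K^n$. Alternatively, one keeps the sup/inf definition and uses the pointwise identity
\[
d(A,C)\le d(A,B)+d(B,C)+\mathrm{diam}(B),
\]
obtained from $||\bar a-\bar c||\le||\bar a-\bar b'||+||\bar b'-\bar b''||+||\bar b''-\bar c||$ with independent $\bar b',\bar b''\in B$ and infimum over all six points, then lets $B\ni\beta$ range over shrinking neighbourhoods so that the spectral infimum of $\mathrm{diam}(B)$ drops to the correct element of $\Spec(R[X])$. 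Either way, the delicate step---where Robson's careful spectral formalism is essential---is verifying that realizations, infima, and suprema in $\Spec(R[X])$ can be chosen consistently so that the three suprema defining $\mu(\alpha,\beta)$, $\mu(\beta,\gamma)$, $\mu(\alpha,\gamma)$ are compatibly compared.
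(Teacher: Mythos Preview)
The paper does not contain a proof of this statement at all: it is quoted as ``Theorem~II'' from Robson~\cite{robson} and used as a black box. The author only recalls the definitions of $d$ and $\mu$ and states the theorem, so there is no argument here against which your proposal can be compared.

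As to the proposal on its own terms: the overall architecture is sensible, but several steps are only gestured at. In the positive-definiteness direction $\alpha\neq\beta\Rightarrow\mu(\alpha,\beta)>0$, you choose $f$ with $f>_\alpha 0$ and $f\le_\beta 0$ and then pass to $\{f\ge\varepsilon\}$ and $\{f\le-\varepsilon\}$; if $f(\beta)=0$ exactly, no $\varepsilon>0$ keeps $\beta$ in the second set, so this needs a sharper separation (e.g.\ first arrange strict signs on both sides). The phrase ``intersecting with a sufficiently large ball\ldots where the radius may need to be spectral quantities'' is precisely the issue: points of $\Spec(R[\bar X])$ need not lie in $\widetilde{B(0,\rho)}$ for any $\rho\in R$, and you have not said how such spectral radii interact with the sup/inf formalism. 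For the triangle inequality, the realization-in-a-saturated-extension idea is the right one, but the crucial claim---that $\mu(\alpha,\beta)$ coincides with the element of $\Spec(R[X])$ determined by $\|\bar a^{*}-\bar b^{*}\|$ for realizations $\bar a^{*},\bar b^{*}$---is exactly what Robson's paper establishes, and it is asserted rather than proved in your sketch. None of this is fatal to the strategy, but a self-contained proof would have to fill these gaps; for the purposes of the present paper the intended move is simply to cite Robson.
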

Now we define the Robson ball $\mathbb{B}(\alpha,r)$ with center $\alpha\in\Spec(R[X_1,\dots,X_n])$ and radius $r\in R$ as
\[
\mathbb{B}(\alpha,r):=\{\beta\in\Spec(R[X_1,\dots,X_n])|\,\mu(\alpha,\beta)\leq r\}.
\]

\section{discrete prime cones}
For rings $A,B$, by $A\subseteq B$, we mean $A$ is a subring of $B$. For an ordered ring $(A,<)$, we denote its positive cone by $A^{\geq0}$. We also sometimes talk about an ordered ring, say $A$, without mentioning its order.

\begin{definition}\label{mo}
Let $A$ be a ring. We say that $\alpha\in \Spec(A)$ is a \textsl{discrete prime cone} of $A$ if for every $f\in A$, if $f\in\alpha$, $f\notin \supp(\alpha)$, then $f-1 \in\alpha$.
\end{definition}
\begin{lemma}\label{1}
Let $A$ be a ring and $\alpha\in \Spec(A)$, then $\alpha$ is a discrete prime cone of $A$ iff $A/\supp(\alpha)$ is a discretely ordered ring.
\end{lemma}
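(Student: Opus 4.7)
The plan is simply to unwind the definitions on both sides of the biconditional. For $\alpha\in\Spec(A)$ the quotient ring $A/\supp(\alpha)$ is linearly ordered by declaring $\bar f\geq \bar 0$ iff $f\in\alpha$; this is well-defined and total thanks to the prime-cone axioms (closure of $\alpha$ under addition and multiplication, $\alpha\cup(-\alpha)=A$, and $\supp(\alpha)=\alpha\cap(-\alpha)$). Under this dictionary, $\bar f>\bar 0$ corresponds to the conjunction ``$f\in\alpha$ and $f\notin\supp(\alpha)$'', while $\bar f\geq \bar 1$, i.e., $\overline{f-1}\geq \bar 0$, corresponds to $f-1\in\alpha$.

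With these translations recorded, the definition of a discretely ordered ring given in the introduction—no element strictly between $0$ and $1$, equivalently every strictly positive element is $\geq 1$—applied to $A/\supp(\alpha)$ reads verbatim as: for every $f\in A$, if $f\in\alpha$ and $f\notin\supp(\alpha)$, then $f-1\in\alpha$. This is exactly the clause in Definition \ref{mo}, so both implications follow by reading the equivalence left to right and right to left. I foresee no real obstacle here; the lemma is essentially a change of language between statements about elements of $A$ and statements about their images in $A/\supp(\alpha)$, and the only item worth a one-line verification is the well-definedness of the induced order on the quotient.
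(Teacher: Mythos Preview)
Your proposal is correct and follows essentially the same approach as the paper: both arguments rest on the dictionary $\bar f>0 \Leftrightarrow (f\in\alpha \wedge f\notin\supp(\alpha))$ and $\bar f-1\geq 0 \Leftrightarrow f-1\in\alpha$, and then read the definition of discreteness through it. The paper writes out the two implications separately while you collapse them into a single translation, but the content is identical.
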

\begin{proof}
Let $\alpha$ be a discrete prime cone of $A$ and $f\in A$. Let $\bar{f}$ be the image of $f$ under the canonical homomorphism $A\rightarrow A/\supp(\alpha)$. Suppose that $\bar{f}>0$ in $A/\supp(\alpha)$. Then $f\in\alpha$ and $f\notin \supp(\alpha)$, so $f-1 \in\alpha$, so $\bar{f}-1\geq 0$ in $A/\supp(\alpha)$. Therefore $A/\supp(\alpha)$ is a discretely ordered ring.

Conversely let $A/\supp(\alpha)$ be a discretely ordered ring. Suppose that $f\in A$, $f\in\alpha$, $f\notin\supp(\alpha)$. It follows that $\bar{f}>0$ in $A/\supp(\alpha)$. Also discreteness of $A/\supp(\alpha)$ implies that $\bar{f}-1\geq0$. So $f-1 \in\alpha$. Thus $\alpha$ is a discrete prime cone of $A$.
\end{proof}
\begin{lemma} \label{2}
Let $\mathfrak{p}$ be a real prime ideal of $A$ such that $A/\mathfrak{p}$ has a discrete ordering. Then there is $\alpha\in \Spec(A)$ such that $\alpha$ is a discrete prime cone of $A$.
\end{lemma}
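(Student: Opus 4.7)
The plan is to lift the given discrete ordering on $A/\mathfrak{p}$ along the canonical projection $\pi\colon A\to A/\mathfrak{p}$ and then invoke Lemma \ref{1}. Concretely, write $<$ for the discrete ordering on $A/\mathfrak{p}$ and set
\[
\alpha:=\pi^{-1}\big(\{\bar{x}\in A/\mathfrak{p}\,:\,\bar{x}\geq 0\}\big).
\]

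First I would check that $\alpha$ is a prime cone of $A$ with $\supp(\alpha)=\mathfrak{p}$. The closure conditions $\alpha+\alpha\subseteq\alpha$ and $\alpha\cdot\alpha\subseteq\alpha$ are inherited from the positive cone of $<$ on $A/\mathfrak{p}$; totality of $<$ yields $\alpha\cup(-\alpha)=A$; and since $A/\mathfrak{p}$ is a domain whose ordering has only $0$ as a self-opposite element, we have $\alpha\cap(-\alpha)=\pi^{-1}(\{0\})=\mathfrak{p}$, which is prime by hypothesis. Hence $\supp(\alpha)=\mathfrak{p}$.

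Second, since $A/\supp(\alpha)=A/\mathfrak{p}$ is discretely ordered by assumption (by the very order used to build $\alpha$), Lemma \ref{1} immediately yields that $\alpha$ is a discrete prime cone of $A$, completing the proof.

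There is no genuine obstacle here: the content of the lemma is just that any ordering on the quotient pulls back along $\pi$ to a prime cone, and discreteness transfers through Lemma \ref{1}. If one prefers a direct verification bypassing Lemma \ref{1}, one argues instead as follows: for $f\in\alpha\setminus\supp(\alpha)$ the image $\pi(f)$ is strictly positive in $A/\mathfrak{p}$, and by discreteness there is no element of $A/\mathfrak{p}$ strictly between $0$ and $1$, so $\pi(f)\geq 1$, i.e.\ $f-1\in\alpha$.
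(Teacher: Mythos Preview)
Your proof is correct and essentially identical to the paper's: both pull back the nonnegative cone of the given discrete ordering along the quotient map and check $\supp(\alpha)=\mathfrak{p}$. The only cosmetic difference is that the paper verifies the discreteness condition $f-1\in\alpha$ directly (your alternative argument at the end) rather than invoking Lemma~\ref{1}; your use of Lemma~\ref{1} is legitimate since, as you note, the ordering that $\alpha$ induces on $A/\mathfrak{p}$ coincides with the one you started from.
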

\begin{proof}
Let $(A/\mathfrak{p},<)$ be a discretely ordered ring. Consider the canonical homomorphism $\varphi\colon A\rightarrow A/\mathfrak{p}$. Set $\alpha=\varphi^{-1}((A/\mathfrak{p})^{\geq 0})$. It is easy to see that $\alpha$ is a prime cone of $A$. For discreteness, we first note that $\supp(\alpha)=\mathfrak{p}$. Now suppose $a\in\alpha$, $a\notin\supp(\alpha)$, so $a\notin\mathfrak{p}$, then $\varphi(a)\in(A/\mathfrak{p})^{> 0}$. Since $(A/\mathfrak{p},<)$ is a discretely ordered ring, we have $\varphi(a-1)=\varphi(a)-1\in(A/\mathfrak{p})^{\geq 0}$ which implies $a-1\in\alpha$.
\end{proof}

Now from Lemmas \ref{1}, \ref{2} we conclude

\begin{corollary} There is a bijective correspondence between the set of all discrete prime cones of $A$ and the set of all pairs $(\mathfrak{p},<)$ where $\mathfrak{p}$ is a real prime ideal of $A$ and $<$ is a discrete ordering of $A/\mathfrak{p}$. Also as a special case, there is a bijective correspondence between the set of all discrete orderings of $A$ and the set of all discrete prime cones $\alpha$ of $A$ such that $\textrm{supp}(\alpha)=\{0\}$.
\end{corollary}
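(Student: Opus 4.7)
The plan is to define maps in both directions and check they are mutually inverse, relying directly on the two preceding lemmas for the nontrivial content. Write $\mathcal{D}(A)$ for the set of discrete prime cones of $A$ and $\mathcal{P}(A)$ for the set of pairs $(\mathfrak{p},<)$ consisting of a real prime ideal $\mathfrak{p}$ of $A$ together with a discrete ordering $<$ of $A/\mathfrak{p}$. I would define
\[
\Phi\colon\mathcal{D}(A)\longrightarrow\mathcal{P}(A),\qquad \alpha\longmapsto\bigl(\supp(\alpha),<_{\alpha}\bigr),
\]
where $<_{\alpha}$ is the ordering on $A/\supp(\alpha)$ induced by $\alpha$; that $<_{\alpha}$ is discrete is exactly Lemma~\ref{1}, so $\Phi$ lands in $\mathcal{P}(A)$. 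In the opposite direction, I would define
\[
\Psi\colon\mathcal{P}(A)\longrightarrow\mathcal{D}(A),\qquad(\mathfrak{p},<)\longmapsto\varphi^{-1}\bigl((A/\mathfrak{p})^{\geq 0}\bigr),
\]
with $\varphi\colon A\to A/\mathfrak{p}$ the canonical projection; Lemma~\ref{2} guarantees the image is a discrete prime cone.

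Next I would verify $\Psi\circ\Phi=\mathrm{id}$ and $\Phi\circ\Psi=\mathrm{id}$. For the first, by the very definition of $<_{\alpha}$ the positive cone of $<_{\alpha}$ on $A/\supp(\alpha)$ is the image of $\alpha$ under $\varphi$, so pulling back along $\varphi$ returns $\alpha$ itself (using the standing fact that $\supp(\alpha)\subseteq\alpha$). For the second, given $(\mathfrak{p},<)$, the support computation $\supp(\Psi(\mathfrak{p},<))=\mathfrak{p}$ is precisely what is checked inside the proof of Lemma~\ref{2}, and the ordering that $\Psi(\mathfrak{p},<)$ induces on $A/\mathfrak{p}$ has positive cone $(A/\mathfrak{p})^{\geq 0}$ by construction, hence coincides with $<$. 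The special case then follows by restricting $\Phi$ and $\Psi$ to the subfamily with $\mathfrak{p}=\{0\}$ and identifying $A/\{0\}$ with $A$. Since all the real work has already been absorbed by Lemmas~\ref{1} and~\ref{2}, no serious obstacle remains; the only thing one has to be careful about is keeping the definitions of $<_{\alpha}$ and of the preimage cone strictly parallel, so that the two round-trip identities become tautologies rather than separate computations.
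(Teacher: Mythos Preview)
Your proposal is correct and follows the paper's approach: the paper simply records the corollary as an immediate consequence of Lemmas~\ref{1} and~\ref{2} without writing out the bijection, and what you have done is make explicit the maps $\Phi$ and $\Psi$ and the two round-trip identities that are tacitly understood there. Nothing needs to change; your more detailed write-up is a faithful unpacking of the paper's one-line deduction.
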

\begin{definition} Let $(A,<)$, $(B,<)$ be ordered rings, $(B,<)$ is called an \textsl{ordered extension} of $(A,<)$ if $A$ is a subring of $B$ and the restriction of the ordering of $B$ to $A$ is the same as the ordering of $A$. In this case we also say that $A$ is an ordered subring of $B$.
\end{definition}
Now we move towards the $M$-relativized version of the above theorems where $M$ is a discretely ordered ring. Note that in the definition below, $M$ is just an ordered ring.
\begin{definition}\label{mo}
Let $M$ be an ordered ring and let $A$ be a ring with $M\subseteq A$. We say that $\alpha\in \Spec(A)$ is an $M$-\textsl{prime cone} of $A$ if
\begin{itemize}
\item[(i)] $M^{\geq0}\subseteq\alpha$,
\item[(ii)] $M^{\geq0}\cap \textrm{supp}\,(\alpha)=\{0\}$.
\end{itemize}
\end{definition}
It is clear from the definition above that if $\alpha\in \Spec(A)$ is an $M$-prime cone of $A$, then there is an order-preserving injective homomorphism $M\rightarrow A/\supp(\alpha)$, so we can regard $A/\supp(\alpha)$ as an ordered extension of (an isomorphic copy) of $M$. For simplicity we will identify $M$ and its image in $A/\textrm{supp}(\alpha)$.
\begin{definition}\label{dpc}
Let $(M,<)$ be a discretely ordered ring and let $A$ be a ring with $M\subseteq A$. We say that $\alpha\in \Spec(A)$ is an $M$-\textsl{discrete prime cone} of $A$ if
\begin{itemize}
\item[(i)] $M^{\geq0}\subseteq\alpha$,
\item[(ii)] $M^{\geq0}\cap \supp(\alpha)=\{0\}$,
\item[(iii)] for every $f\in A$, if $f\in\alpha$, $f\notin \supp(\alpha)$, then $f-1 \in\alpha$.
\end{itemize}
\end{definition}

\begin{definition} For a discretely ordered ring $(M,<)$ and an ordered ring $(N,<)$ we say that $N$ is a \textsl{discrete extension} of $M$, if $N$ is an ordered extension of $M$ and $N$ is a discretely ordered ring .
\end{definition}
\begin{lemma}
Let $(M,<)$ be a discretely ordered ring and let $A$ be a ring with $M\subseteq A$. Suppose that $\alpha\in \Spec(A)$ is an $M$-discrete prime cone of $A$, then  $A/\supp(\alpha)$ is a discrete extension of $M$. Conversely, let $\alpha\in \Spec(A)$ be an $M$-prime cone of $A$ and let $A/\supp(\alpha)$ is a discrete extension of $M$, then $\alpha\in \Spec(A)$ is an $M$-discrete prime cone of $A$.
\end{lemma}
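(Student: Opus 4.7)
The plan is to derive both directions directly from Lemma \ref{1} combined with the observation (already noted after the definition of $M$-prime cone) that conditions (i) and (ii) of Definition \ref{dpc} are exactly what is needed to see $A/\supp(\alpha)$ as an ordered extension of $M$. So there is essentially nothing new to prove beyond unpacking definitions.

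For the forward direction, I would start by assuming that $\alpha$ is an $M$-discrete prime cone of $A$. Conditions (i) and (ii) yield an order-preserving injective ring homomorphism $M\hookrightarrow A/\supp(\alpha)$: condition (i) ensures that the image of $M^{\geq 0}$ lies inside the positive cone induced by $\alpha$, and condition (ii) ensures that the kernel of the composition $M\to A\to A/\supp(\alpha)$ is trivial, so no strict order relation of $M$ gets collapsed. Identifying $M$ with its image, $A/\supp(\alpha)$ becomes an ordered extension of $M$. Condition (iii) is literally the hypothesis of Lemma \ref{1}, so $A/\supp(\alpha)$ is a discretely ordered ring. Combining the two observations, $A/\supp(\alpha)$ is a discrete extension of $M$.

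For the converse, assume $\alpha$ is an $M$-prime cone and $A/\supp(\alpha)$ is a discrete extension of $M$. Then conditions (i) and (ii) already hold by the definition of an $M$-prime cone, so only condition (iii) requires verification. Since $A/\supp(\alpha)$ is a discrete extension of $M$, it is in particular a discretely ordered ring, and hence Lemma \ref{1} applied in the other direction yields exactly condition (iii), namely that $\alpha$ is a discrete prime cone of $A$. Thus $\alpha$ satisfies all three conditions of Definition \ref{dpc}.

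There is no genuine obstacle here; the only small point worth being careful about is checking that the map $M\to A/\supp(\alpha)$ is strictly order-preserving and injective rather than just order-preserving, which uses condition (ii) together with the fact that in the discretely ordered ring $M$ (and in the quotient) negativity of $-x$ is equivalent to positivity of $x$. Once that is recorded, the lemma is simply a repackaging of Lemma \ref{1} in the relativized setting.
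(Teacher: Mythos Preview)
Your proposal is correct and follows essentially the same approach as the paper: unpack conditions (i) and (ii) to see $A/\supp(\alpha)$ as an ordered extension of $M$, and use condition (iii) for discreteness. The only cosmetic difference is that you cite Lemma \ref{1} for the discreteness step, whereas the paper reproves its content inline; your version is arguably tidier but not substantively different.
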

\begin{proof}
Let $\alpha$ be an $M$-discrete prime cone of $A$ and let $f\in A$. Let $\bar{f}\in A/\supp(\alpha)$ with $\bar{f}>0$ in $A/\supp(\alpha)$. This means that $f\in\alpha$ and $f\notin \supp(\alpha)$, so $f-1 \in\alpha$, so $\bar{f}-1\geq 0$ in $A/\supp(\alpha)$. Also suppose that $a\in M$ and $a>0$, then $a\in M^{\geq0}$ and items (i), (ii) of Definition \ref{dpc} imply that $a\in\alpha$, $a\notin \supp(\alpha)$, hence $a>0$ in $A/\supp(\alpha)$. Thus $A/\supp(\alpha)$ is a discrete extension of $M$.

For the converse let $A/\supp(\alpha)$ is a discrete extension of $M$. We check that $\alpha$ satisfies the requirements of Definition \ref{dpc}. Let $a\in M^{>0}$, then $a>0$ in $A/\supp(\alpha)$ which means that $a\in\alpha$, $a\notin \supp(\alpha)$. So $M^{\geq0}\subseteq\alpha$ and $M^{\geq0}\cap \supp(\alpha)=\{0\}$. Now suppose that $f\in A$, $f\in\alpha$, $f\notin\supp(\alpha)$. It follows that $\bar{f}>0$ in $A/\supp(\alpha)$. Also discreteness of $A/\supp(\alpha)$ implies that $\bar{f}-1\geq0$. So $f-1 \in\alpha$. Thus $\alpha$ is an $M$-discrete prime cone of $A$.
\end{proof}
\begin{definition}
Let $M$ be a discretely ordered ring and let $A$ be a ring with $M\subseteq A$. We say that $\alpha\in \Spec(A)$ is an $M$-\textsl{discrete ordering} of $A$ if $\alpha$ is an $M$-\textsl discrete prime cone of $A$ and $\supp(\alpha)=\{0\}$.
\end{definition}
\begin{remark}
If $\alpha\in \Spec(A)$ is an $M$-discrete ordering of $A$, then $A$ has an ordering which makes $A$ a discrete extension of $M$.
\end{remark}
Now for a discretely ordered ring $M$ and a ring $A\supseteq M$, we can conclude that
\begin{corollary}
There is a bijective correspondence between the set of all $M$-discrete prime cones of $A$ and the set of all pairs $(\mathfrak{p},<)$ where $\mathfrak{p}$ is a real prime ideal of $A$ with $\mathfrak{p}\cap M=\{0\}$ and $(A/\mathfrak{p},<)$ is a discrete extension of $M$. Also as a special case, there is a bijective correspondence between the set of all $M$-discrete orderings of $A$ and the set of all discrete orderings of $A$ which extend the ordering of $M$.
\end{corollary}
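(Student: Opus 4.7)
The corollary is essentially packaging the preceding lemma as a bijection, so my plan is to explicitly define the two maps, verify that they land in the claimed sets using that lemma, and check that they are mutually inverse.

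For the forward map, I would send an $M$-discrete prime cone $\alpha \in \Spec(A)$ to the pair $(\supp(\alpha), <_\alpha)$, where $<_\alpha$ denotes the induced ordering on $A/\supp(\alpha)$. That $\supp(\alpha)$ is a real prime ideal is standard real spectrum theory. To see $\supp(\alpha) \cap M = \{0\}$, take $a \in M \cap \supp(\alpha)$; since $M = M^{\geq 0} \cup (-M^{\geq 0})$, either $a$ or $-a$ lies in $M^{\geq 0}$, and condition (ii) of Definition \ref{dpc} forces that element, and hence $a$, to be zero. The preceding lemma then gives that $(A/\supp(\alpha), <_\alpha)$ is a discrete extension of $M$, so the pair lies in the target set.

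For the backward map, given a pair $(\mathfrak{p}, <)$ with $\mathfrak{p} \cap M = \{0\}$ and $(A/\mathfrak{p}, <)$ a discrete extension of $M$, let $\varphi \colon A \to A/\mathfrak{p}$ be the canonical projection and set $\alpha = \varphi^{-1}((A/\mathfrak{p})^{\geq 0})$, as in the proof of Lemma \ref{2}. This is a prime cone of $A$ with $\supp(\alpha) = \mathfrak{p}$. Since every positive element of $M$ is sent by $\varphi$ into $(A/\mathfrak{p})^{>0}$ (because $M \hookrightarrow A/\mathfrak{p}$ as an ordered subring), conditions (i) and (ii) of Definition \ref{dpc} hold, so $\alpha$ is an $M$-prime cone. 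The converse direction of the preceding lemma then upgrades $\alpha$ to an $M$-discrete prime cone.

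It remains to observe that the two maps are mutually inverse. One direction is immediate: starting from $\alpha$, we recover $(\supp(\alpha), <_\alpha)$ and then $\varphi^{-1}((A/\supp(\alpha))^{\geq 0})$, which coincides with $\alpha$ since an element of $A$ is in $\alpha$ precisely when its image in $A/\supp(\alpha)$ is $\geq_\alpha 0$. In the other direction, starting from $(\mathfrak{p}, <)$, the constructed $\alpha$ has $\supp(\alpha) = \mathfrak{p}$ and the induced ordering $<_\alpha$ agrees with $<$ by construction of $\alpha$.

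For the special case, $M$-discrete orderings are by definition the $M$-discrete prime cones with $\supp(\alpha) = \{0\}$. Under the bijection just established these correspond to pairs $(\{0\}, <)$ where $\{0\}$ is a real prime ideal of $A$ (equivalently, $A$ is a real domain) and $(A, <)$ is a discrete extension of $M$; that is, to discrete orderings of $A$ that extend the ordering of $M$. There is no essential obstacle here: the entire argument is a bookkeeping exercise on top of the preceding lemma, and the only points needing mild care are that $\supp(\alpha) \cap M = \{0\}$ follows from the one-sided hypothesis $M^{\geq 0} \cap \supp(\alpha) = \{0\}$, and that the two maps are genuine inverses rather than just a correspondence between cardinalities.
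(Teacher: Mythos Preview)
Your proposal is correct and is exactly the argument the paper has in mind: the corollary is stated without proof, as an immediate consequence of the preceding lemma together with the construction of Lemma~\ref{2}, and you have simply written out that bookkeeping explicitly. The only detail you add beyond the paper's implicit reasoning is the observation that $M^{\geq 0}\cap\supp(\alpha)=\{0\}$ implies $M\cap\supp(\alpha)=\{0\}$, which is indeed needed and is handled correctly.
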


Suppose $R$ is a real closed field with $R\supseteq M$ where $M$ is a discretely ordered ring. The inclusion homomorphism $i\colon M[\bar{X}]\rightarrow R[\bar{X}]$ induces a continuous map $\pi\colon\Spec(R[\bar{X}])\rightarrow\Spec(M[\bar{X}])$. For $\alpha\in \Spec(R[\bar{X}])$ we have $\pi(\alpha)=\alpha\cap M[\bar{X}]$. We denote $\pi(\alpha)=\alpha_{M}$. It is easily seen that $\supp(\alpha_{M})=\supp(\alpha)\cap M[\bar{X}]$. The next proposition will enable us to translate our diophantine problems into the real algebraic geometric terms, but to prove it we need the following two lemmas.
\begin{lemma}\label{sur}
Let $R_1\subseteq R_2$ be two real closed fields, then the natural continuous map $\pi\colon \Spec(R_2[\bar{X}])\rightarrow\Spec(R_1[\bar{X}])$ is surjective.
\end{lemma}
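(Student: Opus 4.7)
The plan is to produce, given $\alpha\in\Spec(R_1[\bar X])$, a prime cone $\beta\in\Spec(R_2[\bar X])$ whose restriction to $R_1[\bar X]$ is $\alpha$, by evaluating $R_2$-polynomials at the natural ``generic point'' of $\alpha$ viewed inside a sufficiently large real closed overfield.

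First I would unpack the datum $\alpha$: the residue field $k(\supp\alpha)$ contains $R_1$ (since $R_1$ is a field and hence $\supp(\alpha)\cap R_1=\{0\}$), and so does its real closure $k(\alpha)=k_{R_1}(\alpha)$. Setting $\bar x:=(X_1(\alpha),\dots,X_n(\alpha))\in k(\alpha)^n$, the cone $\alpha$ is recovered by $f\in\alpha\iff f(\bar x)\geq 0$ in $k(\alpha)$, for $f\in R_1[\bar X]$.

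Next I would amalgamate $k(\alpha)$ and $R_2$ over $R_1$ inside a common real closed field $\Omega$. Since the theory of real closed fields is complete and model complete, $R_1$ is existentially closed in every real closed extension; concretely, one picks any real closed field $\Omega$ extending $R_1$ that is, say, $(|R_2|+|k(\alpha)|)^+$-saturated, and uses model-completeness to embed both $R_2$ and $k(\alpha)$ into $\Omega$ over $R_1$. Because every real closed field carries a unique field ordering, these embeddings are automatically order-preserving, so the image of $k(\alpha)$ inside $\Omega$ carries its original ordering $<_\alpha$.

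Having these embeddings, I would define
\[
\beta:=\{\,g\in R_2[\bar X]\;:\;g(\bar x)\geq 0\text{ in }\Omega\,\}.
\]
Since $\Omega$ is an ordered domain, the standard verification shows $\beta$ is a prime cone of $R_2[\bar X]$ with $\supp(\beta)=\{g\in R_2[\bar X]:g(\bar x)=0\}$. For $f\in R_1[\bar X]\subseteq R_2[\bar X]$ we have $f(\bar x)\in k(\alpha)\subseteq\Omega$, and the ordering on $k(\alpha)$ induced from $\Omega$ coincides with $<_\alpha$, so $f\in\beta\iff f(\bar x)\geq_\alpha 0\iff f\in\alpha$. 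Hence $\pi(\beta)=\beta\cap R_1[\bar X]=\alpha$, proving surjectivity.

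The only nontrivial point is the amalgamation of $R_2$ and $k(\alpha)$ over $R_1$ into a common real closed field; everything else is bookkeeping. This is the step I would expect to flag carefully, but it is a direct consequence of the model-completeness of \textrm{RCF} together with a saturation argument (equivalently, of the amalgamation property for real closed fields over a real closed subfield), so no new ideas are required beyond the preliminaries already invoked in the paper.
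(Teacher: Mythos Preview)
Your proof is correct but follows a genuinely different route from the paper's. The paper argues by contradiction: if $\alpha$ had no lift to $\Spec(R_2[\bar X])$, then by the formal Positivstellensatz there would exist $p,q_1,\dots,q_r\in\sum R_2[\bar X]^2$ and $b_1,\dots,b_r\in\alpha$ with $p+\sum_i q_ib_i=-1$; quantifying over the coefficients of $p$ and the $q_i$ and applying Tarski's transfer principle then produces such an identity already over $R_1$, contradicting the fact that $\alpha$ is a prime cone of $R_1[\bar X]$. Your argument is instead direct and constructive: you amalgamate $R_2$ and $k(\alpha)$ over $R_1$ inside a saturated real closed $\Omega$ and pull back the positive cone of $\Omega$ along evaluation at $\bar x=(X_1(\alpha),\dots,X_n(\alpha))$. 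Both proofs ultimately rest on the model theory of real closed fields (Tarski transfer on one side, model-completeness and amalgamation on the other), but yours avoids the Positivstellensatz entirely and exhibits $\beta$ explicitly, while the paper's approach stays closer to the algebraic machinery of \cite{BCR} invoked elsewhere. It is worth noting that the paper's proof of the very next lemma proceeds in your spirit---evaluating polynomials at $(X_1(\alpha),\dots,X_n(\alpha))$ inside $k(\alpha)$---so your method fits naturally with the surrounding material.
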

\begin{proof}
Let $\alpha\in\Spec(R[\bar{X}])$ and there is no $\beta\in\Spec(R_2[\bar{X}])$ including $\alpha$. It follows from the formal Positivstellensatz (\cite{BCR}, Proposition 4.4.1) that there are $p(\bar{X}),q_1(\bar{X}),\dots,q_r(\bar{X})$ in $\sum R_2[\bar{X}]^2$ and $b_1(\bar{X}),\dots,b_r(\bar{X})\in\alpha$ such that $p(\bar{X})+q_1(\bar{X})b_1(\bar{X})+\dots+q_r(\bar{X})b_r(\bar{X})=-1$. Now by quantifying over the coefficients of $p(\bar{X})$ and $q_i(\bar{X}), i=1,\dots,r$ and using Tarski's transfer theorem we can assume that there are $p^{*}(\bar{X})$, $q_1^{*}(\bar{X})$, $\dots$, $q_r^{*}(\bar{X})$ in $\sum R_1[\bar{X}]^2$ such that $p^{*}(\bar{X})+q_1^{*}(\bar{X})b_1(\bar{X})+\dots+q_r^{*}(\bar{X})b_r(\bar{X})=-1$, which is a contradiction.
\end{proof}
\begin{lemma}\label{sur2}
Let $(A,<)$ be an ordered subring of a real closed field $R$ and let $\pi\colon \Spec(R[\bar{X}])$ $\rightarrow\Spec(A[\bar{X}])$ be the natural map induced by the inclusion map $i\colon A[\bar{X}]\rightarrow R[\bar{X}]$. Suppose $\alpha\in \Spec(A[\bar{X}])$ is an $A$-prime cone. Then $\alpha$ is in the image of $\pi$.
\end{lemma}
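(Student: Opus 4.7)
My plan is to construct $\beta \in \Spec(R[\bar{X}])$ satisfying $\pi(\beta) = \alpha$ by evaluating $R[\bar{X}]$ at the tuple $(X_1(\alpha),\dots,X_n(\alpha))$ inside a real closed field that compatibly contains both $R$ and $k(\alpha)$.

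The $A$-prime cone conditions on $\alpha$ imply that the composition $A \hookrightarrow A[\bar{X}] \twoheadrightarrow A[\bar{X}]/\supp(\alpha)$ is an order-preserving injection; passing to fraction fields and real closures yields an order-preserving embedding $F(A) \hookrightarrow k(\alpha)$. Since $A$ is also an ordered subring of $R$, both $R$ and $k(\alpha)$ are real closed extensions of $F(A)$ in which the inherited ordering on $F(A)$ agrees with the one coming from $A$. Let $L$ be the real closure of $F(A)$ inside $R$; by uniqueness of the real closure of an ordered field, $L$ is canonically identified with the real closure of $F(A)$ inside $k(\alpha)$, so both $R$ and $k(\alpha)$ contain $L$ as a common ordered subfield. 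A standard amalgamation argument for real closed fields over $L$ then yields a real closed field $R^*$ containing $R$ and $k(\alpha)$ as ordered subfields over $L$; concretely, one takes an $|R|^{+}$-saturated elementary extension $R^*$ of $k(\alpha)$ and invokes the model completeness of the theory of real closed fields to embed $R$ into $R^*$ over $L$.

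Let $\varphi\colon R[\bar{X}] \to R^*$ be the evaluation homomorphism extending the chosen inclusion $R \hookrightarrow R^*$ and sending $X_i \mapsto X_i(\alpha) \in k(\alpha) \subseteq R^*$. Set $\beta := \varphi^{-1}((R^*)^{\geq 0})$, which is a prime cone of $R[\bar{X}]$ with $\supp(\beta) = \ker\varphi$. For any $f \in A[\bar{X}]$, the restriction $\varphi|_{A[\bar{X}]}$ coincides with the composition $A[\bar{X}] \twoheadrightarrow A[\bar{X}]/\supp(\alpha) \hookrightarrow k(\alpha) \hookrightarrow R^*$, because both homomorphisms agree on $A \subseteq L$ and send each $X_i$ to $X_i(\alpha)$. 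Hence $f \in \beta \cap A[\bar{X}]$ iff the image of $f$ in $k(\alpha)$ is non-negative, iff $f \in \alpha$, giving $\pi(\beta) = \alpha$. The main obstacle is the amalgamation step: one must arrange $R^*$ so that the two natural embeddings $A \hookrightarrow R \hookrightarrow R^*$ and $A \hookrightarrow k(\alpha) \hookrightarrow R^*$ coincide, which is exactly what is achieved by requiring both embeddings of $R$ and of $k(\alpha)$ into $R^*$ to fix $L$ pointwise.
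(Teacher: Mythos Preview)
Your proof is correct, but it takes a somewhat different route from the paper's. The paper first reduces to the special case where $R$ is the real closure of $A$ (the general case then follows from the preceding Lemma~\ref{sur} on surjectivity between real spectra over nested real closed fields). In that special case $R$ embeds order-preservingly into $k(\alpha)$ itself, so one has a single evaluation map $R[\bar X]\to k(\alpha)$ sending $X_i\mapsto X_i(\alpha)$; the paper then appeals to the formal Positivstellensatz to conclude that $\alpha$ extends to a prime cone of $R[\bar X]$.

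You instead treat the general $R$ directly by amalgamating $R$ and $k(\alpha)$ over the common real closed subfield $L$ into a larger real closed field $R^*$, using saturation and model completeness of RCF, and then pull back the positive cone of $R^*$ along the evaluation map. This bypasses both Lemma~\ref{sur} and the Positivstellensatz at the price of invoking model-theoretic amalgamation; conversely, the paper's argument stays within the algebraic toolkit already in use (Positivstellensatz, Tarski transfer) and isolates the passage between real closed base fields as a separate lemma. Either approach is perfectly adequate here; yours is arguably more streamlined, while the paper's keeps the dependence on model theory to a minimum.
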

\begin{proof}
We assume that $R$ is the real closure of $A$. Then the general case follows from Lemma \ref{sur}. Since $\alpha$ is an $A$-prime cone, we have $A^{\geq 0}\subseteq\alpha$ and $A^{\geq 0}\cap \supp(\alpha)=\{0\}$, from which we conclude that the natural homomorphisms
\[
M[\bar{X}]\rightarrow M[\bar{X}]/ \supp(\alpha)\rightarrow k(\supp(\alpha)) \rightarrow k(\alpha),
\]
induce an order-preserving injective homomorphism $M\rightarrow k(\alpha)$ which itself induces an order-preserving injective homomorphism $R\rightarrow k(\alpha)$. Now observe that image of any $f\in\alpha$ under the composition of homomorphisms:
\[
R[\bar{X}]\rightarrow R[X_1(\alpha),\dots,X_n(\alpha)]\rightarrow k(\alpha)
\]
is $\geq0$. Hence Positivstellensatz ensures that $\alpha$ can be extended to a prime cone of $R[\bar{X}]$.
\end{proof}
\begin{definition}
We say that $\alpha\in\Spec(R[\bar{X}])$ is an \textsl{$M$-arithmetical point} of $\Spec(R[\bar{X}])$ if $\alpha_{M}$ is an $M$-discrete prime cone of $M[\bar{X}]$.
\end{definition}
\begin{proposition}\label{p}
Let $M$ be a discretely ordered ring and $R$ be a real closed field with $R\supset M$. Suppose $\alpha\in\Spec(R[\bar{X}])$ is an $M$-arithmetical point of $\Spec(R[\bar{X}])$, then $M[X_1(\alpha),\dots,$ $X_n(\alpha)]$ is a discretely ordered subring of $k(\alpha)$. Conversely let $\beta\in \Spec(M[\bar{\bar{X}}])$ be an $M$-prime cone of $M[\bar{X}]$ such that $M[X_1(\beta),\dots,$ $X_n(\beta)]$ is a discretely ordered subring of $k(\beta)$, then there is an $M$-arithmetical point $\alpha$ of $\Spec(R[\bar{X}])$ such that $\alpha_{M}=\beta$.
\end{proposition}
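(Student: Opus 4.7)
The plan is to handle the two implications separately. In each case the substantive content reduces to (i) identifying $M[X_1(\alpha),\dots,X_n(\alpha)]$ with the quotient $M[\bar X]/\supp(\alpha_M)$ as an ordered ring, and (ii) invoking the preceding lemma (which characterizes $M$-discrete prime cones in terms of discrete extensions) together with Lemma \ref{sur2}.

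For the forward direction, I would start by analyzing the evaluation homomorphism $\mathrm{ev}_\alpha\colon M[\bar X]\to k(\alpha)$ sending $f$ to $f(X_1(\alpha),\dots,X_n(\alpha))$. Its image is by definition $M[X_1(\alpha),\dots,X_n(\alpha)]$, and its kernel is $\supp(\alpha)\cap M[\bar X]=\supp(\alpha_M)$, so it induces a ring isomorphism $M[\bar X]/\supp(\alpha_M)\cong M[X_1(\alpha),\dots,X_n(\alpha)]$. I would then verify that this isomorphism preserves orderings: for $f\in M[\bar X]\setminus\supp(\alpha_M)$, the value $f(X_1(\alpha),\dots,X_n(\alpha))$ is $\geq 0$ in $k(\alpha)$ iff $f\in\alpha$ iff $f\in\alpha\cap M[\bar X]=\alpha_M$, so the ordering induced by $\alpha_M$ on the quotient agrees with the ordering inherited from $k(\alpha)$ on the image. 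Since $\alpha$ is $M$-arithmetical, $\alpha_M$ is an $M$-discrete prime cone, and the preceding lemma then gives that $M[\bar X]/\supp(\alpha_M)$ is a discrete extension of $M$; transporting across the isomorphism yields that $M[X_1(\alpha),\dots,X_n(\alpha)]$ is a discretely ordered subring of $k(\alpha)$.

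For the converse, I would apply the same identification to $\beta$: namely $M[X_1(\beta),\dots,X_n(\beta)]\cong M[\bar X]/\supp(\beta)$ as ordered rings, where the right-hand side carries the ordering induced by $\beta$. The hypothesis that this image is a discretely ordered subring of $k(\beta)$, together with $\beta$ being an $M$-prime cone, implies by the converse half of the preceding lemma that $\beta$ is in fact an $M$-discrete prime cone of $M[\bar X]$. To produce $\alpha$, I invoke Lemma \ref{sur2} with $A=M$: since $\beta$ is an $M$-prime cone, it lies in the image of the canonical map $\pi\colon\Spec(R[\bar X])\to\Spec(M[\bar X])$, so there exists $\alpha\in\Spec(R[\bar X])$ with $\alpha_M=\beta$. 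Because $\alpha_M=\beta$ is an $M$-discrete prime cone, $\alpha$ is by definition an $M$-arithmetical point, as required.

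There is essentially no obstacle beyond bookkeeping here; the substantial work is already packaged in the preceding lemma and in Lemma \ref{sur2}. The one subtle point worth stating explicitly is the compatibility between the ordering that a prime cone induces on its quotient ring and the ordering inherited from the real closure of the residue field — once this is observed, discreteness transfers freely between the two descriptions and both halves fall out immediately.
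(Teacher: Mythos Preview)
Your proposal is correct and follows essentially the same route as the paper: identify $M[X_1(\alpha),\dots,X_n(\alpha)]$ with $M[\bar X]/\supp(\alpha_M)$ via the evaluation map, then use the characterization of $M$-discrete prime cones and Lemma~\ref{sur2}. The only cosmetic difference is that for the converse the paper re-verifies condition~(iii) of Definition~\ref{dpc} by hand rather than citing the preceding lemma, but the content is identical.
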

\begin{proof}
Let $\alpha\in\Spec(R[\bar{X}])$ be an $M$-arithmetical point of $\Spec(R[\bar{X}])$. There is a canonical homomorphism $j\colon M[\bar{X}]\rightarrow R[\bar{X}]/\supp(\alpha)$. Note that for $f(\bar{X})\in M[\bar{X}]$, $j(f(\bar{X}))=f(X_1(\alpha),\dots,$ $X_n(\alpha))$. So $\ker j=M[\bar{X}]\cap \supp(\alpha)= \supp(\alpha_M)$. It follows that $M[\bar{X}]/ \supp(\alpha_M)\cong \textrm{Im}(j)=M[X_1(\beta),\dots,$ $X_n(\beta)]$ is a discrete extension of $M$.

Conversely, let $\beta$ be an $M$-prime cone of $M[\bar{X}]$ such that $M[X_1(\beta),\dots,$ $X_n(\beta)]$ is a discretely ordered subring of $k(\beta)$. Let $f\in\beta$, $f\notin \supp(\beta)$, then $f>0$ in $M[\bar{X}]/ \supp(\beta)\cong M[X_1(\beta),\dots,$ $X_n(\beta)]$, thus $f-1\geq0$ and so $f-1\in\alpha$. Therefore $\beta$ is an $M$-discrete prime cone of $M[\bar{X}]$. Also by Lemma \ref{sur2}, there is $\alpha\in \Spec(R[\bar{X}])$ such that $\alpha_{M}=\beta$.
\end{proof}
Now we are ready to state our geometric translation of the mentioned diophantine problems.
\begin{theorem}
Let $M$ be a discretely ordered subring of a real closed field $R$. Suppose $f(\bar{X})\in M[\bar{X}]$ and $V\subseteq R^{n}$ is the real algebraic set defined by $f(\bar{X})=0$. Then $f(\bar{X})=0$ has a solution in a discretely ordered ring $N\supseteq M$ iff $\widetilde{V}\subseteq \Spec(R[\bar{X}])$ has an $M$-arithmetical point.
\end{theorem}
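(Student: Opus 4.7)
The plan is to prove both directions as essentially direct applications of Proposition \ref{p}, with the ``geometric'' direction $(\Leftarrow)$ being immediate and the ``arithmetic'' direction $(\Rightarrow)$ requiring one to manufacture a suitable $M$-prime cone from a solution and then lift it via Lemma \ref{sur2}.

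For $(\Leftarrow)$, I assume $\alpha\in\widetilde{V}$ is an $M$-arithmetical point. By Proposition \ref{p}, the ring $N := M[X_1(\alpha),\dots,X_n(\alpha)]\subseteq k(\alpha)$ is a discretely ordered ring extending $M$. Since $\alpha\in\widetilde{V}$ means $f(X_1(\alpha),\dots,X_n(\alpha))=0$ holds in $k(\supp(\alpha))$, the tuple $(X_1(\alpha),\dots,X_n(\alpha))$ is a solution to $f=0$ in $N$.

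For $(\Rightarrow)$, suppose $(a_1,\dots,a_n)\in N^n$ is a solution with $N$ a discretely ordered ring extending $M$. Let $N' := M[a_1,\dots,a_n]\subseteq N$, equipped with the induced order; this is a discretely ordered ring (discreteness is clearly inherited by any subring containing $1$) and a discrete extension of $M$. Let $\varphi\colon M[\bar X]\to N'$ be the evaluation map $X_i\mapsto a_i$ and set $\mathfrak{p}:=\ker\varphi$. Since $N'$ is ordered, in particular a domain, $\mathfrak{p}$ is prime; it is real because $M[\bar X]/\mathfrak{p}\cong N'$ is orderable; and $\mathfrak{p}\cap M=\{0\}$ because $\varphi$ restricts to the inclusion $M\hookrightarrow N'$. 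Pulling back the positive cone of $N'$ along $\varphi$ yields an $M$-prime cone $\beta\in\Spec(M[\bar X])$ with $\supp(\beta)=\mathfrak{p}$, and the identification $M[X_1(\beta),\dots,X_n(\beta)]\cong N'$ makes it a discretely ordered subring of $k(\beta)$. Applying the converse direction of Proposition \ref{p} (whose proof already invokes Lemma \ref{sur2}) produces an $M$-arithmetical point $\alpha\in\Spec(R[\bar X])$ with $\alpha_M=\beta$. Finally, $f\in\mathfrak{p}=\supp(\beta)=\supp(\alpha)\cap M[\bar X]$, so $f\in\supp(\alpha)$, i.e., $f(X_1(\alpha),\dots,X_n(\alpha))=0$ in $k(\supp(\alpha))$, which is precisely the condition $\alpha\in\widetilde{V}$.

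The proof is mostly bookkeeping: Proposition \ref{p} already carries the substantive content (the existence of a lift from $\Spec(M[\bar X])$ to $\Spec(R[\bar X])$ via Lemma \ref{sur2} and Positivstellensatz). The only non-formal point to verify is that the induced order on $N'\subseteq N$ is genuinely a discrete extension of $M$, which follows trivially since no element of $N$, and hence of $N'$, lies strictly between $0$ and $1$. I do not foresee a genuine obstacle, so I expect the proof to be short.
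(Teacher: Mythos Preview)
Your proof is correct and follows essentially the same route as the paper's own argument: both directions invoke Proposition~\ref{p}, with the forward direction constructing $\beta$ as the pullback of the positive cone of $M[a_1,\dots,a_n]$ (the paper writes this as $\mathrm{Im}(\varphi)$) along the evaluation map, and then lifting to $\Spec(R[\bar X])$. Your explicit remark that discreteness passes to subrings is the only added detail, and it is unproblematic.
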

\begin{proof}
Let $\alpha\in\widetilde{V}$ is an $M$-arithmetical point, then $f(X_1(\alpha),\dots,X_n(\alpha))=0$ holds true in $k(\alpha)$ and consequently in $M[X_1(\alpha),\dots,$ $X_n(\alpha)]$. Also by Proposition \ref{p} $M[X_1(\alpha),\dots,$ $X_n(\alpha)]$ is a discretely ordered ring.

Let $f(\bar{X})\in M[\bar{X}]$ and $f(\bar{X})=0$ has a solution $\bar{a}=(a_1,\dots,a_n)$ in a discretely ordered ring $N\supseteq M$. Let $\mathfrak{p}=\{g(\bar{X})\in M[\bar{X}]; g(\bar{a})=0\}$, then it is easy to see that $\mathfrak{p}$ is a real prime ideal of $M[\bar{X}]$. Consider the homomorphism $\varphi\colon M[\bar{X}]\rightarrow N$ which sends $g(\bar{X})$ to $g(\bar{a})$. Clearly $\ker\varphi=\mathfrak{p}$ and consequently $\textrm{Im}(\varphi)\cong M[\bar{X}]/\mathfrak{p}$ inherits a discrete ordering $<$ from $N$. Set $\beta=\varphi^{-1}((M[\bar{X}]/\mathfrak{p})^{\geq0})$, thus $\beta$ is a prime cone of $M[\bar{X}]$ with $\supp(\alpha)=\mathfrak{p}$. Therefore $M[X_1(\beta),\dots,$ $X_n(\beta)]\cong M[\bar{X}]/\supp(\beta)\cong M[\bar{X}]/\mathfrak{p}$ is a discretely ordered ring and according to Proposition \ref{p} there is an $M$-arithmetical point $\alpha$ of $\Spec(R[\bar{X}])$ such that $\alpha_{M}=\beta$. Recall that $f(\bar{X})\in\mathfrak{p}= \supp(\beta)=\supp(\alpha)\cap M[\bar{X}]$. It follows that $f(X_1(\alpha),\dots,X_n(\alpha))=0$ holds true in $k(\alpha)$ which means that $\alpha\in\widetilde{V}$.
\end{proof}
Now we have tools to talk about the discrete orderings of $M[\bar{X}]$ in geometric terms.
\begin{definition}
Let $M$ be a discretely ordered subring of a real closed field $R$. We say that $\alpha\in \Spec(R[\bar{X}])$ is a transcendental $M$-arithmetical point of $\Spec(R[\bar{X}])$ if $\alpha$ does not lie on any $\widetilde{V}\subseteq\Spec(R[\bar{X}])$ where $V$ is proper real algebraic subset of $R^{n}$defined over $M$.
\end{definition}
\begin{proposition}
Let $\alpha$ be a transcendental $M$-arithmetical point of $\Spec(R[\bar{X}])$, then $\supp(\alpha_{M})$ $=\{0\}$, i.e., $\alpha_{M}$ is a positive cone of a discrete ordering of $M[\bar{X}]$. Conversely let $\beta$ be a positive cone of a discrete ordering of $M[\bar{X}]$, then there is a transcendental $M$-arithmetical point $\alpha$ of $\Spec(R[\bar{X}])$ such that $\alpha_{M}=\beta$.
\end{proposition}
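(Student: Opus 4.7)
The plan is to prove both implications directly from the definitions, using Proposition \ref{p} in the converse. The key observation in both directions is the identity $\supp(\alpha)\cap M[\bar{X}]=\supp(\alpha_M)$ already noted before Lemma \ref{sur}: a polynomial $f\in M[\bar{X}]$ lies in $\supp(\alpha_M)$ if and only if $f(X_1(\alpha),\dots,X_n(\alpha))=0$ in $k(\alpha)$, i.e.\ if and only if $\alpha\in\widetilde{V}$ for $V=\{f=0\}\subseteq R^n$.

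For the forward direction I will argue by contrapositive. Assume $\supp(\alpha_M)\neq\{0\}$ and pick a non-zero $f\in\supp(\alpha_M)$. Let $V\subseteq R^n$ be the real algebraic set defined by $f(\bar{X})=0$; since $f$ is a non-zero polynomial in $M[\bar{X}]$, $V$ is a proper real algebraic subset of $R^n$ defined over $M$. By the key observation $\alpha\in\widetilde{V}$, which contradicts transcendentality. Hence $\supp(\alpha_M)=\{0\}$, and because $\alpha_M$ is also an $M$-discrete prime cone of $M[\bar{X}]$ (by definition of an $M$-arithmetical point), the corollary following Definition \ref{dpc} identifies it with the positive cone of a discrete ordering of $M[\bar{X}]$ extending the one on $M$.

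For the converse, let $\beta\in\Spec(M[\bar{X}])$ be the positive cone of a discrete ordering of $M[\bar{X}]$, so $\beta$ is an $M$-prime cone with $\supp(\beta)=\{0\}$, and hence $M[X_1(\beta),\dots,X_n(\beta)]\cong M[\bar{X}]/\supp(\beta)=M[\bar{X}]$ is a discretely ordered subring of $k(\beta)$. Proposition \ref{p} then produces an $M$-arithmetical point $\alpha\in\Spec(R[\bar{X}])$ with $\alpha_M=\beta$. To show that $\alpha$ is transcendental, suppose for contradiction that $\alpha\in\widetilde{V}$ for some proper real algebraic subset $V\subseteq R^n$ defined by polynomials $f_1,\dots,f_k\in M[\bar{X}]$. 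Since $V\neq R^n$, at least one $f_i$ is a non-zero polynomial, but $\alpha\in\widetilde{V}$ forces $f_i\in\supp(\alpha)\cap M[\bar{X}]=\supp(\beta)=\{0\}$, a contradiction.

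The whole argument is essentially bookkeeping once Proposition \ref{p} is in hand. The only minor point to verify is that every proper real algebraic subset of $R^n$ defined over $M$ is cut out by at least one non-zero polynomial in $M[\bar{X}]$, which is immediate from $V\neq R^n$. Consequently there is no genuine obstacle in this proposition; all the real content has already been carried out in the earlier lemmas and in Proposition \ref{p}.
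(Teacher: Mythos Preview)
Your proof is correct. The paper itself leaves this proposition to the reader, and what you have written is exactly the routine verification one expects: the forward direction unwinds the definition of transcendentality via $\supp(\alpha_M)=\supp(\alpha)\cap M[\bar{X}]$, and the converse invokes Proposition~\ref{p} (equivalently Lemma~\ref{sur2}) to lift $\beta$ and then checks transcendentality by the same support identity. The only implicit assumption worth making explicit is that ``a discrete ordering of $M[\bar{X}]$'' in the statement is meant to extend the given ordering of $M$ (so that $\beta$ is indeed an $M$-prime cone); this is the intended reading in the paper, consistent with the corollary following Definition~\ref{dpc}.
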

\begin{proof}
Left to the reader.
\end{proof}
\section{Main Theorem}
In this section we prove the main theorem of this paper, but before that we need a lemma and a proposition.
\begin{lemma}\label{semialgebraic}
Let $R\subseteq R_1\subseteq R_2$ be real closed fields. Let $\mathcal{S}$ be a semialgebraic formula over $R$. Suppose that $\alpha_{2}\in \Spec(R_2[X_1,\dots,X_n])$ and $\alpha_1=\alpha_2\cap R_1[X_1,\dots,X_n]$. Then $\alpha_2\in\widetilde{\mathcal{S}(R_2)}$ iff $\alpha_1\in\widetilde{\mathcal{S}(R_1)}$.
\end{lemma}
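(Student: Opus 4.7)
The plan is to deduce this from Tarski's transfer theorem once a suitable order-preserving embedding of real closed fields is produced.

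First I would unpack the defining condition of the tilde operation: by definition $\alpha_i\in\widetilde{\mathcal{S}(R_i)}$ means $k(\supp(\alpha_i))\models\mathcal{S}(X_1(\alpha_i),\dots,X_n(\alpha_i))$, and since $\mathcal{S}$ is a semialgebraic (hence first-order) formula in the language of ordered rings with parameters from $R$, and real closure is an elementary extension, this is equivalent to the same statement holding in $k(\alpha_i)$.

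Next I would construct a canonical order-preserving embedding $\iota\colon k(\alpha_1)\hookrightarrow k(\alpha_2)$ sending $X_j(\alpha_1)$ to $X_j(\alpha_2)$ for each $j$. The inclusion $R_1[\bar X]\hookrightarrow R_2[\bar X]$ sends $\supp(\alpha_1)=\supp(\alpha_2)\cap R_1[\bar X]$ into $\supp(\alpha_2)$, hence induces an injection of domains
\[
R_1[\bar X]/\supp(\alpha_1)\ \hookrightarrow\ R_2[\bar X]/\supp(\alpha_2),
\]
which extends uniquely to the fraction fields $k(\supp(\alpha_1))\hookrightarrow k(\supp(\alpha_2))$. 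Because $\alpha_1=\alpha_2\cap R_1[\bar X]$, the ordering $<_{\alpha_1}$ on the former is exactly the restriction of $<_{\alpha_2}$ on the latter, so this is an embedding of ordered fields. By uniqueness of the real closure (up to an order-preserving isomorphism over the ordered base field) the map extends to an order-preserving embedding $\iota\colon k(\alpha_1)\hookrightarrow k(\alpha_2)$, and by construction $\iota(X_j(\alpha_1))=X_j(\alpha_2)$.

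Finally I would invoke Tarski's transfer theorem: $\iota$ embeds one real closed field into another, so it is elementary in the language of ordered rings. Since all parameters appearing in $\mathcal{S}$ come from $R\subseteq R_1$ and are fixed by $\iota$, we obtain
\[
k(\alpha_1)\models\mathcal{S}(X_1(\alpha_1),\dots,X_n(\alpha_1))\iff k(\alpha_2)\models\mathcal{S}(X_1(\alpha_2),\dots,X_n(\alpha_2)),
\]
which is exactly the required equivalence $\alpha_1\in\widetilde{\mathcal{S}(R_1)}\Leftrightarrow\alpha_2\in\widetilde{\mathcal{S}(R_2)}$. The only real content is the construction of $\iota$ and the check that it respects both the ordering and the evaluation maps $X_j\mapsto X_j(\alpha_i)$; once that is in place the transfer theorem does the rest, so I do not anticipate a serious obstacle beyond being careful with supports and the identification of residue fields.
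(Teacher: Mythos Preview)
Your proposal is correct and follows essentially the same route as the paper: construct the order-preserving $R$-homomorphism $R_1[\bar X]/\supp(\alpha_1)\hookrightarrow R_2[\bar X]/\supp(\alpha_2)$ sending $X_j(\alpha_1)\mapsto X_j(\alpha_2)$, extend it to the real closures $k_{R_1}(\alpha_1)\hookrightarrow k_{R_2}(\alpha_2)$, and then transfer the truth of $\mathcal{S}$. The paper's proof is terser---it states the embedding and the chain of equivalences without explicitly naming Tarski's transfer theorem or passing through the fraction fields---but the underlying argument is the same.
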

\begin{proof}
We have the injective order-preserving $R$-homomorphism
\begin{center}
$R_1[\bar{X}]/ \supp(\alpha_1)\rightarrow R_2[\bar{X}]/ \supp(\alpha_2)$,
\end{center}
sending $X_i(\alpha_1)$ to $X_i(\alpha_2)$, $1\leq i\leq n$, which extends to an injective order preserving $R$-homomorphism
$k_{R_1}(\alpha_1)\rightarrow k_{R_2}(\alpha_2)$.
Therefore $\alpha_1\in\widetilde{\mathcal{S}(R_1)}$ iff $\mathcal{S}(X_1(\alpha_1),\dots,X_n(\alpha_1))$ holds true in $k_{R_1}(\alpha_1)$ iff
$\mathcal{S}(X_1(\alpha_2),\dots,X_n(\alpha_2))$ holds true in $k_{R_2}(\alpha_2)$ iff
$\alpha_2\in\widetilde{\mathcal{S}(R_2)}$.
\end{proof}
\begin{proposition}\label{sphere}
Let $\alpha\in \Spec(R[\bar{X}])$ and $\pi\colon \Spec(k(\alpha)[\bar{X}])\rightarrow\Spec(R[\bar{X}])$ be the canonical projection and $r\in R$. Suppose that
\[
S_{\alpha,r}=\big{\{}\bar{x}\in k(\alpha)^{n}\,|\sum_{i=1}^{n}(x_i-X_i(\alpha))^{2}\leq r^{2}\big{\}},
\]
then $\pi(\widetilde{S_{\alpha,r}})\subseteq\mathbb{B}(\alpha,r)$.
\end{proposition}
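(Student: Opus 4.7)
The plan is to unwind Robson's definition of $\mu$ and reduce the assertion to Tarski's transfer theorem applied to the real closed field $k(\beta)$. Fix $\beta \in \widetilde{S_{\alpha,r}}$ and set $\gamma = \pi(\beta) = \beta \cap R[\bar{X}]$; the goal is $\mu(\alpha, \gamma) \leq r$. Since
\[
\mu(\alpha, \gamma) = \sup_{\alpha \in \widetilde{A},\, \gamma \in \widetilde{B}} d(A, B),
\]
it suffices to show $d(A,B) \leq r$ for every pair of semialgebraic sets $A, B \subseteq R^n$ with $\alpha \in \widetilde{A}$ and $\gamma \in \widetilde{B}$, and for that it is in turn enough to exhibit $\bar{a} \in A$ and $\bar{b} \in B$ in $R^n$ with $||\bar{a}-\bar{b}|| \leq r$.

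The key step is to work inside $k(\beta)$, which naturally accommodates two copies of the relevant data. First, because $\beta \in \Spec(k(\alpha)[\bar{X}])$, there is a canonical order-preserving $R$-embedding $k(\alpha) \hookrightarrow k(\beta)$, under which the tuple $(X_1(\alpha),\dots,X_n(\alpha)) \in k(\alpha)^n$ becomes a tuple in $k(\beta)^n$ lying in $A(k(\beta))$; this uses $\alpha \in \widetilde{A}$ together with the fact that semialgebraic membership is preserved under order-preserving embeddings of real closed fields. Second, from $\gamma = \pi(\beta)$ one gets an order-preserving $R$-embedding $k(\gamma) \hookrightarrow k(\beta)$ sending $X_i(\gamma) \mapsto X_i(\beta)$, so that $(X_1(\beta),\dots,X_n(\beta))$ lies in $B(k(\beta))$; this is exactly Lemma \ref{semialgebraic} applied with $R_1 = R$ and $R_2 = k(\alpha)$. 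Finally, the assumption $\beta \in \widetilde{S_{\alpha,r}}$ reads $\sum_{i=1}^{n}(X_i(\beta)-X_i(\alpha))^2 \leq r^2$ in $k(\beta)$, so these two tuples are within distance $r$ of one another there.

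Putting the three observations together, the existential sentence over $R$
\[
\exists\, \bar{a}\, \exists\, \bar{b}\, \Bigl( A(\bar{a}) \wedge B(\bar{b}) \wedge \textstyle\sum_{i=1}^{n}(a_i-b_i)^2 \leq r^2 \Bigr)
\]
is satisfied in the real closed field $k(\beta) \supseteq R$. By Tarski's transfer theorem it is satisfied in $R$, producing the desired $\bar{a},\bar{b} \in R^n$ with $||\bar{a}-\bar{b}|| \leq r$; hence $d(A,B) \leq r$. Since $A,B$ were arbitrary, $\mu(\alpha,\gamma) \leq r$, i.e., $\pi(\beta) \in \mathbb{B}(\alpha,r)$.

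There is no deep obstacle here; the content of the proposition amounts to careful bookkeeping of the two $R$-embeddings of residue fields into $k(\beta)$ together with transfer. The only point that requires attention is the interpretation of the supremum/infimum in the one-dimensional real spectrum $\Spec(R[X])$ occurring in Robson's definition, but exhibiting an explicit pair $(\bar{a},\bar{b}) \in R^n \times R^n$ realizing $||\bar{a}-\bar{b}|| \leq r$ cleanly sidesteps the issue, since this pair witnesses the required inequality directly at the level of $R$.
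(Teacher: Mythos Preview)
Your proof is correct and is essentially the contrapositive of the paper's argument: the paper assumes $\mu(\alpha,\pi(\beta))>r$, chooses $A,B$ with $d(A,B)>r$, transfers the universal statement $\forall\bar{x}\in A\,\forall\bar{y}\in B\,\|\bar{x}-\bar{y}\|>r$ from $R$ up to $k_{k(\alpha)}(\beta)$, and plugs in the two tuples $(X_i(\alpha))$ and $(X_i(\beta))$ to contradict $\beta\in\widetilde{S_{\alpha,r}}$, whereas you argue directly by producing those same two tuples in $k(\beta)$ and transferring the corresponding existential statement down to $R$. The ingredients---the embeddings $k(\alpha)\hookrightarrow k(\beta)$ and $k_R(\gamma)\hookrightarrow k(\beta)$ together with Tarski's transfer---are identical.
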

\begin{proof}
By way of contradiction let $\beta\in\widetilde{S_{\alpha,r}}$ and $\pi(\beta)=\beta_1$ with $\mu(\alpha,\beta_1)>r$, then there are semi-algebraic subsets $A,B\subseteq R^{n}$ such that $\alpha\in\widetilde{A}$, $\beta_1\in\widetilde B$ and $d(A,B)>r$ (see Section \ref{prelim}). This implies that
\[
R\models\forall \bar{x}\in A\,\, \forall \bar{y}\in B\,\, ||\bar{x}-\bar{y}||>r.
\]
It follows from $\beta_1\in\widetilde B$ that $k_R(\beta_1)\models B\big{(}X_1(\beta_1),\dots,X_{n}(\beta_1)\big{)}$. Since as ordered fields we have the following inclusion
\[
k_R(\beta_1)=RC(F(R[\bar{X}]/\supp(\beta_1)))\subseteq RC(F(k(\alpha)[\bar{X}]/\supp(\beta)))=k_{k(\alpha)}(\beta),
\]
we deduce $k_{k(\alpha)}(\beta)\models B\big{(}X_1(\beta),\dots,X_{n}(\beta)\big{)}$. Then by using Tarski's transfer theorem we get
\[
k_{k(\alpha)}(\beta)\models \forall \bar{x}\in A\,\, \sum_{i=1}^{n}(X_{i}(\beta)-X_{i})^{2}>r^2.
\]
Also it follows from $\alpha\in\widetilde{A}$ that $k(\alpha)\models A(X_1(\alpha),\dots,X_n(\alpha))$. Therefore we have
\[
k_{k(\alpha)}(\beta)\models \sum_{i=1}^{n}(X_{i}(\beta)-X_{i}(\alpha))^{2}>r^2,
\]
which contradicts $\beta\in\widetilde{S_{\alpha,r}}$.
\end{proof}
\begin{theorem}\label{main}
Let $M$ be a discretely ordered subring of a real closed field $R$. Suppose that $r\in R_{fin}^{>0}$ is non-infinitesimal and that $\mathbb{B}(\alpha,t)\cap\widetilde{H}_{\bar{a}}=\emptyset$ for every $\bar{a}\in M^{n}$ and any finite $t\in R$. Then there is $\gamma\in\mathbb{B}(\alpha,r)$ such that $\gamma$ is a transcendental $M$-arithmetical point of $\Spec(R[\bar{X}])$.
\end{theorem}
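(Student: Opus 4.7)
The plan is to follow the strategy outlined in the introduction. Write $P_\alpha := (X_1(\alpha),\dots,X_n(\alpha)) \in k(\alpha)^n$ and, via Lemma \ref{semialgebraic}, translate the hyperplane hypothesis to the statement that in $k(\alpha)^n$ the Euclidean distance from $P_\alpha$ to $H_{\bar a}(k(\alpha))$ is infinite for every $\bar{a} \in M^n \setminus \{\bar{0}\}$. By Proposition \ref{sphere} the canonical projection $\pi\colon \Spec(k(\alpha)[\bar{X}]) \to \Spec(R[\bar{X}])$ sends $\widetilde{S_{\alpha,r}}$ into $\mathbb{B}(\alpha,r)$, so it suffices to produce $\gamma' \in \widetilde{S_{\alpha,r}}$ at which every non-constant $F \in M[\bar{X}]$ evaluates to an infinite element of $k(\alpha)$: the image $\gamma := \pi(\gamma')$ then lies in $\mathbb{B}(\alpha,r)$, and because constants are handled by the discreteness of $M$ while non-constants evaluate to elements of absolute value at least $1$, the quotient $M[\bar{X}]/\supp(\gamma_M) \cong M[X_1(\gamma),\dots,X_n(\gamma)]$ is discretely ordered; transcendentality of $\gamma$ follows because no non-constant polynomial over $M$ can vanish at $X_1(\gamma),\dots,X_n(\gamma)$.

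The technical core is the following geometric claim, which isolates the role of the hyperplane hypothesis: \emph{for any finite family $F_1,\dots,F_l$ of non-constant polynomials in $M[\bar{X}]$, there exist a rational direction $\vec{q}\in\mathbb{Q}^n$ and an integer $N>0$ such that for any $N$ successive points $P_1 = P_\alpha, P_2, \dots, P_N$ on the line through $P_\alpha$ parallel to $\vec{q}$ inside $S_{\alpha,r}$, with each step $\|\overrightarrow{P_iP_{i+1}}\|$ finite and non-infinitesimal, some $P_j$ satisfies $F_i(P_j)$ infinite for all $i$.} One proves this by contradiction plus pigeonhole: if every $P_j$ fails, then for $N$ large enough relative to $l$ some single $F$ among the $F_i$'s (of degree $d$, say) takes finite values at many of the $P_j$'s. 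The one-variable restriction $g(t) := F(P_\alpha + t\vec{q}) \in k(\alpha)[t]$ has degree exactly $d$, provided $\vec{q}$ lies in a non-empty Zariski-open subset of $\mathbb{A}^n_{k(\alpha)}$ on which the leading homogeneous forms of all $F_i$ are simultaneously non-zero (such rational $\vec{q}$ exist by density). Iterated application of the mean value theorem over the real closed field $k(\alpha)$ to $g, g', \dots, g^{(d-1)}$ then propagates finiteness from values of $g$ at the $t$-parameters of the $P_j$ down through the derivative chain, ultimately producing a non-zero affine form $\vec{a}\cdot\bar{X} + c$ with $\vec{a}\in M^n\setminus\{\bar{0}\}$, $c\in M$, finite at some point $P$ within finite Euclidean distance of $P_\alpha$. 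Taking differences at two such points on the line (or running the argument one more step) cancels the constant $c$; since discreteness of $M$ forces $\|\vec{a}\|\ge 1$, finiteness of $\vec{a}\cdot P$ at such a $P$ would give a finite upper bound for the distance from $P$ --- hence from $P_\alpha$ --- to $H_{\bar a}$, contradicting the hypothesis.

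Finally I close with a compactness argument. Because $r$ is non-infinitesimal, $N$ successive points with the required spacing fit inside $S_{\alpha,r}$ along any line through $P_\alpha$ in a chosen rational direction. Hence the geometric claim, combined with the reduction of the first paragraph, yields for every finite $\mathcal{F}\subseteq M[\bar{X}]\setminus M$ a point of $\widetilde{S_{\alpha,r}}$ at which all $F\in\mathcal{F}$ are infinite; its image under $\pi$ belongs to the closed constructible set
\[
C_{\mathcal{F},m} := \bigl\{ \gamma \in \mathbb{B}(\alpha,r) : |F(X_1(\gamma),\dots,X_n(\gamma))| \geq m \text{ for all } F \in \mathcal{F} \bigr\}
\]
of $\Spec(R[\bar{X}])$ for every integer $m \geq 1$, so each $C_{\mathcal{F},m}$ is non-empty. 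The family $\{C_{\mathcal{F},m}\}$ has the finite intersection property, and the spectral-topology compactness of the closed ball $\mathbb{B}(\alpha,r)$ delivers $\gamma$ in the common intersection --- the desired transcendental $M$-arithmetical point. The main obstacle is the geometric claim of the middle paragraph: making the iterated mean value theorem reduction precise, arranging a single rational direction $\vec{q}$ to work simultaneously for the whole family $F_1,\dots,F_l$, and handling the constant terms that appear in the affine polynomial reached at the end of the reduction.
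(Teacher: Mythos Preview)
Your proposal follows the same strategy as the paper: pick a generic rational direction $\bar q$, run an iterated mean-value-theorem reduction along the line through $P_\alpha$ parallel to $\bar q$ to reach a linear form, contradict the hyperplane hypothesis, and finish by compactness. The one-variable restriction $g(t)=F(P_\alpha+t\bar q)$ you use is exactly the paper's $G_k(\bar X,\bar q)$ evaluated along the line, so the cores coincide.

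Two points of execution differ and deserve attention. First, at the end of the reduction the paper does not simply ``take differences'' to cancel the constant: from two points $\bar a,\bar b$ on the segment at which the linear form $G_m(\,\cdot\,,\bar q)=P_0(\bar q)+\sum_i P_i(\bar q)X_i$ is finite, it works in $k(\alpha)^{n+1}$ with $A_1=(\bar a,0)$, $B_1=(\bar b,0)$, $A_2=(\bar a,G_m(\bar a,\bar q))$, $B_2=(\bar b,G_m(\bar b,\bar q))$, and uses similar triangles to show that the intersection $C$ of the lines $A_1B_1$ and $A_2B_2$ lies at \emph{finite} distance from $P_\alpha$ and on the relevant $M$-hyperplane. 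Your phrase ``taking differences cancels $c$; finiteness of $\vec a\cdot P$\ldots'' is not quite right as stated --- differencing yields only that $\vec a\cdot(P-P')$ is finite, not $\vec a\cdot P$ --- so you will want the paper's geometric step (or an equivalent algebraic solve for the zero of the affine form along the line) here. Second, the paper runs the compactness argument upstairs in $\Spec(k(\alpha)[\bar X])$, intersecting the constructible sets $\widetilde{S_{\alpha,r}}$ and $\widetilde{V}^k_{F_1,\dots,F_l}$ (compact in the constructible topology), and only then projects by $\pi$; your version runs it downstairs inside $\mathbb{B}(\alpha,r)\subseteq\Spec(R[\bar X])$, which would require separately arguing that the Robson ball is closed or compact. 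Moving the compactness upstairs avoids that issue entirely.
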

\begin{proof}
Let $\bar{X}=(X_1,\dots,X_n), \bar{T}=(T_1,\dots,T_n)$ and let $k$ be a positive integer and $H[\bar{X},\bar{T}]\in R[\bar{X},\bar{T}]$. We first define the following operators.\\
\begin{itemize}
\item[(i)] $\nabla_{\!\!\bar{X}} (H[\bar{X},\bar{T}]) =(\frac{\partial H[\bar{X},\bar{T}]}{\partial X_1},\ldots,\frac{\partial H[\bar{X},\bar{T}]}{\partial X_n})$,
\item[(ii)] $(\nabla_{\!\!\bar{X}}\boldsymbol{\cdot} \bar{T})(H[\bar{X},\bar{T}])=(\frac{\partial H[\bar{X},\bar{T}]}{\partial X_1},\ldots,\frac{\partial H[\bar{X},\bar{T}]}{\partial X_n})\boldsymbol{\cdot} \bar{T}=\displaystyle\sum_{i=1}^{n}\frac{\partial H[\bar{X},\bar{T}]}{\partial X_i}T_i$,
\item[(iii)] $(\nabla^{1}_{\!\!\bar{X}}\boldsymbol{\cdot}\bar{T})(H[\bar{X},\bar{T}])=(\nabla_{\!\!\bar{X}}\boldsymbol{\cdot}\bar{T})(H[\bar{X},\bar{T}])$,\\
\item[(iv)] $(\nabla^{k+1}_{\!\!\bar{X}}\boldsymbol{\cdot}\bar{T})(H[\bar{X},\bar{T}])=(\nabla^{1}_{\!\!\bar{X}}\boldsymbol{\cdot}\bar{T})
    \big{[}(\nabla^{k}_{\!\!\bar{X}}\boldsymbol{\cdot}\bar{T})(H[\bar{X},\bar{T}])
    \big{]}$.\\
\end{itemize}
Suppose that $F(X_1,\dots,X_n)\in M[X_1,\dots,X_n]$ is a non-constant polynomial. We separate two cases: (i) $\deg{F}\geq 2$, (ii) $\deg{F}=1$. We first deal with the first case. The second case will be similar to the last stage of the first case when we have done the reduction. So suppose that $\deg{F}=m+1, m\geq 1$. For $1\leq k\leq m$, put
\[
G_k(\bar{X},\bar{T})=(\nabla^{k}_{\!\!\bar{X}}\boldsymbol{\cdot}\bar{T})F(\bar{X}).
\]
We claim that for $1\leq k\leq m$, the polynomials $G_k(\bar{X},\bar{T})$ are non-zero polynomials with $\deg_{\bar{X}} G_k=m-k+1$. Let $D\subseteq\omega_{*}^{n}$ be such that $F(\bar{X})=\sum_{\bar{v}\in D}a_{\bar{v}} X_1^{v_1}\cdots X_n^{v_n}$ where $a_{\bar{v}}\neq 0$ for $\bar{v}\in D$. The claim will be proved as an immediate consequence of the two easily verified facts below. For any $\bar{v}\in D$ set
\[
D_{\bar{v},k}=\{\bar{w}\in\omega_{*}^{n}\,|\,v_i\geq w_i, \sum_{i=1}^{n}(v_i-w_i)=k\}.
\]

\textbf{Fact 1.} Fix $\bar{v}\in D$, then for every $\bar{w}\in D_{\bar{v},k}$ there is $b_{\bar{v},\bar{w}}\neq 0$ in $M$ such that for every non-zero $a\in M$, the following equation holds
\[
(\nabla^{k}_{\!\!\bar{X}}\boldsymbol{\cdot}\bar{T})(a\,X_1^{v_1}\cdots X_n^{v_n})=\sum_{\substack{\bar{w}\in D_{v,k}\\ \bar{w}+\bar{l}=\bar{v}}} b_{\bar{v},\bar{w}}T^{l_1}_{1}\cdots T^{l_n}_{n}\,X_1^{w_1}\cdots X_n^{w_n}.
\]
This is proved by induction on $k$ and also observing that there is no change of signs in the coefficients.\\

\textbf{Fact 2.} Let $D_k=\displaystyle\bigcup _{\bar{v}\in D} D_{\bar{v},k}$, then for every $\bar{w}\in D_k$ there is a nonzero polynomial $P_{k,\bar{w}}(T)\in M[\bar{T}]$ such that
\begin{equation}\label{coefficients}
G_k(\bar{X},\bar{T})=(\nabla_{\!\!\bar{X}}^{k}\boldsymbol{\cdot}\bar{T})(F(\bar{X}))=\sum_{\bar{w}\in D_k} P_{k,\bar{w}}(\bar{T})X_1^{w_1}\cdots X_n^{w_n}.
\end{equation}
where
\[
P_{k,\bar{w}}(\bar{T})=\sum_{\substack{\bar{v}\in D\\\bar{w}\in D_{\bar{v},k}\\ \bar{w}+\bar{l}=\bar{v}}} b_{\bar{v},\bar{w}} T^{l_1}_{1}\cdots T^{l_n}_{n}.
\]
Note that if $\bar{w}\in D_{\bar{v},k}$, $\bar{w}^{'}\in D_{\bar{v}^{'},k}$ and $\bar{w}=\bar{w}^{'}$, then $\bar{l}\neq \bar{l}^{'}$ where $\bar{l}+\bar{w}=\bar{v}$ and $\bar{l}^{'}+\bar{w}^{'}=\bar{v}^{'}$. This means that the monomials $b_{\bar{v},\bar{w}} T^{l_1}_{1}\cdots T^{l_n}_{n}$ do not remove each other when $\bar{v}$ varies through $D$ and consequently we obtain a nonzero $P_{k,\bar{w}}(\bar{T})$. This proves the claim. Now related to $F(\bar{X})$, we define
\[
U^1_F=\big{\{}\bar{t}\in k(\alpha)^{n}\,|\, P_{k,\bar{w}}(t_1,\dots,t_n)\neq 0,\, 1\leq k\leq m,\,\bar{w}\in D_k\big{\}}.
\]
Also from the above consideration, it follows that we can write $G_k(\bar{X},\bar{T})$ as
\[
G_k(\bar{X},\bar{T})=P_0(\bar{T})+P_1(\bar{T})X_1+\cdots+P_n(\bar{T})X_n,
\]
where $P_i(\bar{T})\in M[\bar{T}], 0\leq i\leq n$ and not all of $P_1(\bar{T})\ldots,P_n(\bar{T})$ are zero. Also we set
\[
U^2_F=\big{\{}\bar{t}\in k(\alpha)^{n}\,|\, \sum_{i=1}^{n}t_iP_i(t_1,\dots,t_n)\neq 0\big{\}}.
\]
Clearly $U^1_F$, $U^2_F$ are two nonempty Zariski open subsets of the affine space $\mathbb{A}^n_{k(\alpha)}$. So does
\[
U_F:=U^1_F\cap U^2_F.
\]
By the density of the $\mathbb{Q}$-rational points in $\mathbb{A}^n_{k(\alpha)}$ with the Zariski topology we can choose
\[
\bar{q}=(q_1,\dots,q_n)\in\mathbb{Q}^n\cap U_F.
\]
Let $S_{\alpha,r}$ be
\[
S_{\alpha,r}=\big{\{}\bar{x}\in k(\alpha)^{n}\,|\sum_{i=1}^{n}(x_i-X_i(\alpha))^{2}\leq r^{2}\big{\}}.
\]
For $P=(p_1,\dots,p_n)\in k(\alpha)^n$, we say that $P$ is $F$-\emph{infinite} if for every $k\in\mathbb{N}$ we have
\[
k(\alpha)\models |F(p_1,\dots,p_n)|>k.
\]
Put $P=(X_1(\alpha),\dots,X_n(\alpha))\in k(\alpha)^n$ and let $Q\in S_{\alpha,r}$ be such that the vector $\overrightarrow{PQ}$ has non-infinitesimal length and is parallel to $\overrightarrow{q}$, i.e., the vector with tail=origin and head=$\bar{q}$. We claim (our second claim) that there is $N\in\mathbb{N}$ such for every $N$ points $P_1,\dots,P_N$ lying on the segment $\overline{PQ}$ with the following properties:
\begin{itemize}
\item[(i)] $P_1=P,\,P_N=Q$,
\item[(ii)] $||\overrightarrow{P_iP}_{i+1}||$ is non-infinitesimal,
\item[(iii)] $P_i$ lies between $P_{i-1}$ and $P_{i+1}$,
\end{itemize}
there is $P\in\{P_1,\dots,P_n\}$ so that $P$ if $F$-infinite. To prove the claim we first define a finite sequence of integers $N_1,\dots,N_m$ as follows. Put $N_1=[\frac{N-1}{2}]$, $N_{i+1}=[\frac{N_i-1}{2}]$ for $1\leq i\leq m-1$ and take $N$ large enough so that $N_m\geq 2$. Now by way of contradiction suppose that all $F(P_i)$'s are finite. Write $\overrightarrow{P_iP}_{i+1}=\lambda_i\overrightarrow{q}$, $\lambda_i\in k(\alpha)$. It is easy to see that $\lambda_i$ is finite and non-infinitesimal. Using the higher dimensional mean value theorem for real closed fields we get $Q_i\in\overline{P_iP}_{i+1}$ such that
\begin{eqnarray*}
F(P_{i+1})-F(P_i)=
\nabla F(Q_i)\boldsymbol{\cdot}\overrightarrow{P_iP}_{i+1}
&=&\lambda_i\,\big{(}\nabla F(Q_i)\boldsymbol{\cdot}\overrightarrow{q}\big{)}\\
&=&\lambda_i\,\big{(}\sum_{j=1}^{n}\frac{\partial F}{\partial X_j}(Q_i)\,.\, q_j\big{)}\\
&=&\lambda_i\,\, G_1(Q_i,\bar{q}).
 \end{eqnarray*}
Since $F(P_i),F(P_{i+1}),\lambda_i$ are finite and $\lambda_i$ is not infinitesimal, it follows that $G_1(Q_i,q)$, $i=1,\dots,N-1$ is finite. Observe that $Q_1,Q_3,Q_5,\dots$ are not infinitesimally close to each other and we have $N_1=[\frac{N-1}{2}]$ many of them. So we can repeat the argument for the polynomial $G_1(\bar{X},\bar{q})\in R[\bar{X}]$ and points $Q_1,Q_3,Q_5,\dots$. Renaming the points as $Q_1,Q_2,Q_3,\dots$ and writing $\overrightarrow{Q_iQ}_{i+1}=\theta_i\overrightarrow{q}$, $\theta_i\in k(\alpha)$, we obtain points $R_i\in\overline{Q_iQ}_{i+1}$ such that
\begin{eqnarray*}
G_1(Q_{i+1},\bar{q})-G_1(Q_i,\bar{q})=
\nabla G_1(R_{i},\bar{q})\boldsymbol{\cdot}\overrightarrow{Q_iQ}_{i+1}
&=&\theta_i\,\big{(}\nabla G_1(R_{i},\bar{q})\boldsymbol{\cdot}\overrightarrow{q}\big{)}\\
&=&\theta_i\,\big{(}\sum_{j=1}^{n}\frac{\partial G_1}{\partial X_j}(R_i)\,.\, q_j\big{)}\\
&=&\theta_i\,\, G_2(R_i,\bar{q}).
\end{eqnarray*}
where $\theta_i$ are finite and infinitesimal. Again it follows that $G_2(R_i,\bar{q})$, $i=1,\dots,N_1-1$ is finite as well as $R_1,R_3,R_5,\dots$ are not infinitesimally close to each other and we have $N_2=[\frac{N_1-1}{2}]$ many of them. Thus after $m$ times repeating the argument and considering the fact that $N_m\geq2$ we finally obtain two points $\bar{a}=(a_1,\dots,a_n)$ and  $\bar{b}=(b_1,\dots,b_n)$ on the segment $\overline{PQ}$ such that they are not infinitesimally close to each other and $G_m(\bar{a},\bar{q})$ and $G_m(\bar{b},\bar{q})$ are finite. Recall the representation $G_m(\bar{X},\bar{T})=P_0(\bar{T})+P_1(\bar{T})X_1+\dots+P_n(\bar{T})X_n$ with $P_i(\bar{T})\in M[\bar{T}]$, then by the choice of $\bar{q}$, not all of $P_1(\bar{q}),\dots,P_n(\bar{q})$ are zero. Note that what makes this reduction process terminates to the linear polynomial $G_m(\bar{X},\bar{T})$ is the fact that the previous $G_i(\bar{X},\bar{T})$'s and $G_i(\bar{X},\bar{q})$'s all were non-zero polynomials. Now at this point we use some Euclidean geometry. Working in $k(\alpha)^{n+1}$ we let
\[
A_1=(\bar{a},0)\,\,\,\,\, B_1=(\bar{b},0)
\]
\[
A_2=(\bar{a},G_m(\bar{a},\bar{q}))\,\,\,\,\, B_2=(\bar{b},G_m(\bar{b},\bar{q})).
\]
The lines $A_1A_2$ and $B_1B_2$ are parallel to each other so let $\mathcal{P}$ be the plane passing through the points $A_1,A_2,B_1,B_2$. We verify the two following facts:\\

\textbf{Fact 3.} $||\overline{A_1A_2}||\neq ||\overline{B_1B_2}||$.\\

If $||\overline{A_1A_2}||=||\overline{B_1B_2}||$, then we would have
\[
P_1(\bar{q})(a_1-b_1)+\cdots+P_1(\bar{q})(a_n-b_n)=0.
\]
Letting $(a_i-b_i)=\delta q_i, \delta\in k(\alpha)$ we get
\[
P_1(\bar{q})q_1+\cdots+P_1(\bar{q})q_n=0,
\]
which violates $\bar{q}\in U_F^2$.

Now without loss of generality we can assume $||\overline{A_1A_2}||>||\overline{B_1B_2}||$.\\

\textbf{Fact 4.} $||\overline{A_1A_2}||-||\overline{B_1B_2}||$ is not infinitesimal.\\

Notice that $\delta$ in the proof of Fact 3. is finite non-infinitesimal, so if $||\overline{A_1A_2}||-||\overline{B_1B_2}||$ is infinitesimal, then we would have
\[
P_1(\bar{q})q_1+\cdots+P_n(\bar{q})q_n=\textrm{infinitesimal}.
\]
Let $m$ be a suitable integer that $mP_1(\bar{q})q_1,\ldots,mP_n(\bar{q})q_n$ be all in $M$, then
\[
mP_1(\bar{q})q_1+\cdots+mP_n(\bar{q})q_n=m\times(\textrm{infinitesimal})=\textrm{infinitesimal},
\]
which violates the discreteness of $M$. This prove Fact 4.

Now let the lines $A_1B_1$ and $A_2B_2$ intersect each other at the point $C\in\mathcal{P}$. By similarity of the two triangles $\triangle{A_1A_2C}$ and $\triangle{B_1B_2C}$ we have
\[
\frac{||\overline{B_1C}||}{||\overline{A_1C}||}=\frac{||\overline{B_1B_2}||}{||\overline{A_1A_2}||}
\]
which implies that
\[
\frac{||\overline{B_1C}||}{||\overline{A_1C}||-||\overline{B_1C}||}=\frac{||\overline{B_1C}||}{||\overline{A_1B_1}||}=
\frac{||\overline{B_1B_2}||}{||\overline{A_1A_2}||-||\overline{B_1B_2}||}.
\]
Recall that $||\overline{A_1B_1}||$ is finite non-infinitesimal, $||\overline{B_1B_2}||$ is finite and Fact 4., so we conclude that  $||\overline{B_1C}||$ is finite. On the other hand being $C$ on the line $A_1B_1$ implies that the $(n+1)$-th coordinate of $C$ in $k(\alpha)^{n+1}$ is zero.
Also being $C$ on the line $A_2B_2$ implies that the point $C$ lies on the hyperplane
\[
P_1(\bar{q})X_1+\cdots+P_n(\bar{q})X_n=X_{n+1}.
\]
Thus writing $C=(c_1,\dots,c_n,0)$ we infer that
\[
P_1(\bar{q})c_1+\cdots+P_n(\bar{q})c_n=0.
\]
So
\[
mP_1(\bar{q})c_1+\cdots+mP_n(\bar{q})c_n=0.
\]
Now regarding $\bar{c}=\gamma=(c_1,\dots,c_n)$ as an element of $\Spec(k(\alpha)[X_1,\dots,X_n])$ we get that $\gamma\in \widetilde{H_{\bar{d}}}$ where $\bar{d}=(mP_1(\bar{q}),\dots,mP_n(\bar{q}))\in M^n$. Let
\[
\pi\colon\Spec(k(\alpha)[X_1,\dots,X_n])\longrightarrow\Spec(R[X_1,\dots,X_n])
\]
be the canonical projection, so we have $\pi(\gamma)\in \widetilde{H_{\bar{d}}}(R)$. Also finiteness of $||\overline{B_1C}||$ implies that there is $t\in R_{fin}$ such that
\[
\sum_{i=1}^{n}(c_i-X_i(\alpha))^{2}\leq t^{2},
\]
so $\gamma\in\widetilde{S_{\alpha,t}}$ which implies that $\pi(\gamma)\in\mathbb{B}(\alpha,t)$. Hence we get $\pi(\gamma)\in\mathbb{B}(\alpha,t)\cap\widetilde{H_{\bar{d}}}(R)$ which violates the hypothesis of the theorem. This proves the second claim for the case $\deg{F}\geq 2$. For the case $\deg F=1$, let $F(\bar{X})=p_0+p_1 X_1+\cdots+p_n X_n$ where $p_i\in M$ for $0\leq i\leq n$. We treat $F(\bar{X})$ exactly as $G_m(\bar{X},\bar{q})$ by putting $p_i=P_i(\bar{q})$, $N=2$,
\[
U_F=\big{\{}\bar{x}\in k(\alpha)^{n}\,|\, \sum_{i=1}^{n}p_i x_i\neq 0\big{\}},
\]
and repeating the last stage of the above argument. This will prove the second claim.

Observe that by considering $2N+1$ points satisfying the conditions (i), (ii), (iii) above, we can actually find two $F$-infinite points which are not infinitesimally close to each other and in fact by iterating, we can find as many such $F$-infinite points as we want. So if $G(X_1,\dots,X_n)$ is another polynomial in $M[X_1,\dots,X_n]$, then by repeating the argument of the second claim, we can find a $G$-infinite point among those previously found $F$-infinite points (which are sufficiently many) provided that $\bar{q}$ is chosen from $U_{F}\cap U_{G}$. Therefore we can easily generalize the assertion in the second claim for any finite set of polynomials $F_1(\bar{X}),\dots, F_l(\bar{X})\in M[\bar{X}]$ in which we have that there is $N$ so that for any $N$ points $P_1,\dots, P_N$ satisfying the conditions (i), (ii), (iii) above, there is $P\in\{P_1,\dots, P_N\}$ which is $F_i$-infinite for $i=1,\dots,l$. Obviously this time we choose a $\mathbb{Q}$-rational point $\bar{q}$ from $U_{F_1}\cap\dots\cap U_{F_l}$.

Now for every finite set of polynomials $F_1(\bar{X}),\dots, F_l(\bar{X})\in M[\bar{X}]$ and every $k\in\mathbb{N}$, we set
\[
V^k_{F_1,\dots,F_l}=\{\bar{x}\in k(\alpha)^n: \bigwedge_{i=1}^{l} |F_i(\bar{x})|>k\}.
\]
In fact we have obtained
\[
\emptyset\neq\widetilde{S_{\alpha,r}}\cap\widetilde{V}^k_{F_1,\dots,F_l},
\]
for each $k\in\mathbb{N}$. So by compactness of $\Spec(k(\alpha)[\bar{X}])$ in the constructible topology, there is $\gamma^{*}\in\widetilde{S_{\alpha,r}}$ such that $\gamma^{*}\in\widetilde{V}^k_{F_1,\dots,F_l}$ for all $F_1[\bar{X}],\dots,F_l[\bar{X}]\in M[\bar{X}], k\in\mathbb{N}$. We conclude that $\gamma:=\pi(\gamma^{*})\in\mathbb{B}(\alpha,r)$ and also $\gamma\in\widetilde{V}^k_{F_1,\dots,F_l}(R)$ which immediately implies that $M[X_1(\gamma),\dots,X_n(\gamma)]$ is a discretely ordered ring which is  equivalent to saying that $\gamma$ is a transcendental $M$-arithmetical point. This finishes the proof.
\end{proof}

\bibliography{reference}
\bibliographystyle{plain}
\end{document}